\newcommand{\Grad}{{\rm Grad\,}}
\newcommand{\Om}{\Omega}
\newcommand{\pa}{\partial}
\newcommand{\ov}{\overline}
\newcommand{\wid}{\widetilde}
\newcommand{\na}{\nabla}
\newcommand{\mat}{\mathbb}
\newcommand{\R}{{\mat R}}
\newcommand{\N}{{\mat N}}
\newcommand{\C}{{\mat C}}
\newcommand{\be}{\begin{eqnarray}}
\newcommand{\ben}{\begin{eqnarray*}}
\newcommand{\en}{\end{eqnarray}}
\newcommand{\enn}{\end{eqnarray*}}
\newtheorem{remark}[theorem]{Remark}
\begin{document}
\renewcommand{\theequation}{\arabic{section}.\arabic{equation}}

\title{\bf
Boundary determination for the Schr\"{o}dinger equation with unknown embedded obstacles by local data}
\author{Chengyu Wu\thanks{School of Mathematics and Statistics, Xi'an Jiaotong University,
Xi'an 710049, Shaanxi, China ({\tt wucy99@stu.xjtu.edu.cn})}
\and
Jiaqing Yang\thanks{School of Mathematics and Statistics, Xi'an Jiaotong University,
Xi'an 710049, Shaanxi, China ({\tt jiaq.yang@xjtu.edu.cn})}
}
\date{}
\maketitle


\begin{abstract}
  In this paper, we consider the inverse boundary value problem of the elliptic operator $\Delta+q$ in a fixed region $\Om\subset\R^3$ with unknown embedded obstacles $D$. In particular, we give a new and simple proof to uniquely determine $q$ and all of its derivatives at the boundary from the knowledge of the local Dirichlet-to-Neumann map on $\pa\Om$, disregarding the unknown obstacle, where in fact only the local Cauchy data of the fundamental solution is used. 
  Our proof mainly depends on the rigorous singularity analysis on certain singular solutions and the volume potentials of fundamental solution, which is easy to extend to many other cases. 
\end{abstract}

\begin{keywords}
Inverse problem, partial data, embedded obstacle, singularity analysis. 
\end{keywords}

\begin{AMS}
35R30.
\end{AMS}

\pagestyle{myheadings}
\thispagestyle{plain}
\markboth{C. Wu and J. Yang}{Boundary determination for the Schr\"{o}dinger equation}

\section{Introduction}\label{sec1}
\setcounter{equation}{0}
Let $\Om\subset\R^3$ be a bounded domain with smooth boundary. Denote by $D\subset\Om$ an open subset such that $\pa D\in C^\infty$, $D\subset\subset\Om$ and $\Om\setminus\ov D$ is connected. In this paper, we are concerned with the inverse boundary value problem of the following system 
\be\label{1.1}
\left\{
\begin{array}{ll}
	\Delta u+qu=0~~~&{\rm in}~\Om\setminus\ov D,\\
	u=f~~~&{\rm on}~\pa\Om, \\
	\mathcal{B}(u)=0~~~&{\rm on}~\pa D,
\end{array}
\right.
\en
where the boundary data $f\in H^{1/2}(\pa\Om)$, $q\in C^\infty(\ov\Om,\C)$ and $\mathcal{B}$ stands for the boundary condition imposed on $\pa D$. To be exact, we shall consider the Dirichlet boundary condition $\mathcal{B}(u)=u$ or the Robin boundary condition $\mathcal{B}(u)=\pa_\nu u+\gamma u$ with $\nu$ the unit exterior normal on $\pa D$ and $\gamma\in C^\infty(\pa D,\C)$. Clearly, when $\gamma=0$ the Robin boundary condition is reduced to the Neumann boundary condition. 

Throughout the paper, we always assume that $0$ is not a Dirichlet eigenvalue of $\Delta+q$ in $\Om\setminus\ov D$ with the boundary condition $\mathcal{B}$ on $\pa D$. Under this assumption, it is known that problem (\ref{1.1}) is well-posed in $H^1(\Om\setminus\ov D)$. Therefore, the so-called Dirichlet-to-Neumann (DtN) map $\Lambda:f\mapsto\pa_\nu u|_{\pa\Om}$ is well-defined and acts as a bounded linear operator from $H^{1/2}(\pa\Om)$ into $H^{-1/2}(\pa\Om)$. In this paper, we will study the inverse problem of recovering $(q,D,\mathcal{B})$ from the local knowledge of $\Lambda$. In particular, let $\Gamma\subset\pa\Om$ be a nonempty open subset. We shall prove: 
\begin{theorem}\label{thm1.1}
	For $(q_i,D_i,\mathcal{B}_i)$, $i=1,2$, suppose $\Lambda_1f=\Lambda_2f$ on $\Gamma$ for all $f\in H^{1/2}(\pa\Om)$ with ${\rm{supp}}f\subset\Gamma$,  
	then 
	\ben
	 D^\alpha q_1=D^\alpha q_2~on~\Gamma~{\text{for}}~all~|\alpha|\geq0.
	\enn
\end{theorem}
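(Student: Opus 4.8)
The plan is to prove the statement pointwise: fix $z_0\in\Gamma$ and a multi-index $\alpha$, and establish $D^\alpha q_1(z_0)=D^\alpha q_2(z_0)$ by induction on $|\alpha|$. The first move is to \emph{localize away from the obstacles}: since $\Gamma$ is open and $D_i\subset\subset\Om$, one may fix $r>0$ so small that $B:=B(z_0,r)$ satisfies $\ov B\cap\pa\Om\subset\Gamma$ and $\ov B\cap\ov{D_i}=\emptyset$ for $i=1,2$. Writing $\Om_r:=\Om\cap B$, $\Gamma_r:=\pa\Om\cap B$, $S_r:=\Om\cap\pa B$, in $\Om_r$ the system (\ref{1.1}) carries no obstacle, so everything reduces to the behaviour of solutions of $\Delta+q_i$ near the boundary patch $\Gamma_r$.

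Second, I would build \emph{singular test solutions} from the fundamental solutions. Extend each $q_i$ to a smooth compactly supported function on $\R^3$ and let $\Phi_i(\cdot,y)$ be a (local) fundamental solution of $\Delta+q_i$; it has the Hadamard form $\Phi_i(x,y)=-\frac{1}{4\pi|x-y|}\big(1+a_1^i(x,y)|x-y|^2+\cdots\big)$ near the diagonal (no logarithms in $\R^3$), the coefficients $a_k^i$ being determined by $q_i$ and its derivatives; this expansion, and the control of its remainder, is exactly where the Lippmann--Schwinger/volume-potential representation of $\Phi_i$ enters. For small $t>0$ put $x_0=x_0(t)=z_0+t\nu(z_0)\notin\ov\Om$ and, for multi-indices $\beta$, set $w_i^{t,\beta}:=\pa_y^\beta\Phi_i(\cdot,x_0)$; this solves $(\Delta+q_i)w_i^{t,\beta}=0$ in a full neighbourhood of $\ov\Om$, is singular only at $x_0\notin\ov\Om$, and obeys $|w_i^{t,\beta}(x)|\sim|x-x_0|^{-1-|\beta|}$. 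Fixing $\chi\in C_c^\infty(\Gamma_r)$ with $\chi\equiv1$ near $z_0$, the Dirichlet data $f:=\chi\,w_1^{t,\beta_1}|_{\pa\Om}$ and $g:=\chi\,w_2^{t,\beta_2}|_{\pa\Om}$ are admissible, and I let $u_1,u_2$ be the solutions of (\ref{1.1}) for $(q_1,D_1,\mathcal B_1)$ with data $f$ and for $(q_2,D_2,\mathcal B_2)$ with data $g$. Since $f,g$ are supported in $\Gamma$, the hypothesis $\Lambda_1=\Lambda_2$ on $\Gamma$ together with the symmetry of each $\Lambda_i$ as a bilinear form (an immediate consequence of Green's identity and of the Dirichlet/Robin conditions on $\pa D_i$) makes the boundary integral over $\Gamma_r$ vanish, so Green's identity on the obstacle-free region $\Om_r$ gives the key relation
\[
\int_{\Om_r}(q_2-q_1)\,u_1u_2\,dx=\int_{S_r}\big(u_2\,\pa_\nu u_1-u_1\,\pa_\nu u_2\big)\,ds.
\]

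Third comes the \emph{singularity analysis}. Well-posedness of (\ref{1.1}) gives $u_i=w_i^{t,\beta_i}+h_i$, where the remainder $h_i$ solves $(\Delta+q_i)h_i=0$ in $\Om\setminus\ov{D_i}$ with data bounded uniformly in $t$ (the cutoff removes the singular part on $\pa\Om$, while $w_i^{t,\beta_i}$ and $\mathcal B_i(w_i^{t,\beta_i})$ stay smooth and bounded on $\pa D_i$), and $h_i$ vanishes on $\Gamma_r$; boundary elliptic regularity then yields $t$-uniform bounds for $h_i$ near $z_0$ and shows it vanishes linearly at $\Gamma_r$. Hence the $S_r$-term above is $O(1)$ and the $h_i$-cross terms are of strictly lower order in $t$ than the leading contribution. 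Inserting the Hadamard expansions reduces matters to the asymptotics, as $t\to0^+$, of the explicit volume integrals
\[
\int_{\Om_r}(x-z_0)^\gamma\,\pa_y^{\beta_1}\frac{1}{|x-x_0|}\;\pa_y^{\beta_2}\frac{1}{|x-x_0|}\,dx,
\]
which, after rescaling by $t$ and flattening $\Gamma_r$ to a half-space, converge to computable half-space integrals whose homogeneity pins down the power of $t$ that multiplies each Taylor coefficient $D^\gamma(q_2-q_1)(z_0)$. Choosing $|\beta_1|+|\beta_2|$ large enough (and balanced) that the coefficient of interest lies in the genuinely divergent range, and varying $(\beta_1,\beta_2)$, one first extracts $q_1(z_0)=q_2(z_0)$ and then, by the induction hypothesis on the orders $<|\alpha|$ (which also kills the sub-leading Hadamard contributions), every $D^\alpha q_1(z_0)=D^\alpha q_2(z_0)$.

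The hard part is precisely this last step. One must (i) estimate the obstacle/cutoff remainders $h_i$ and the $S_r$-term sharply enough that they are of \emph{strictly} lower order in $t$ than the term carrying the sought derivative --- this limits how many $y$-derivatives may be placed on $\Phi_i$ relative to $|\alpha|$ and calls for a careful, tangential-versus-normal, choice of $\beta_1,\beta_2$ --- and (ii) show that, within each homogeneity shell, the half-space integrals furnish enough independent linear combinations of $\{D^\gamma(q_2-q_1)(z_0):|\gamma|=|\alpha|\}$ to solve for each derivative individually. The localization, the vanishing of the boundary term via symmetry of $\Lambda_i$, and the Hadamard/volume-potential machinery are then routine.
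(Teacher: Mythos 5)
Your overall architecture is sound, and it is genuinely different from the paper's: you localize to a half-ball $\Om_r$ avoiding both obstacles, kill the $\Gamma_r$ boundary term using the bilinear symmetry of the DtN maps, and run an Alessandrini-type identity with products of two strongly singular solutions $\partial_y^{\beta_i}\Phi_i(\cdot,x_0)$ (in the spirit of Alessandrini's singular-solution method), whereas the paper never forms such an integral identity: it uses only the data $\Phi(\cdot,z_j)=1/(4\pi|x-z_j|)$, compares the two Cauchy data directly through the layer/volume-potential iteration of Theorems \ref{thm2.2} and \ref{thm3.1}, and reaches a contradiction with a boundary Sobolev-norm blow-up theorem for the volume potential (Theorem \ref{thm2.5}). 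The parts you call routine do check out: the localized Green identity, the symmetry of each $\Lambda_i$ (the $\partial D_i$ contributions cancel for equal Dirichlet or Robin conditions), the $t$-uniform bounds on the remainders $h_i$ (their data stay away from the singular point and vanish near $z_0$), and the $O(1)$ bound on the $S_r$ term.

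The genuine gap is that the decisive quantitative ingredient is only announced as ``the hard part'' and never supplied: you must prove that for some admissible $(\beta_1,\beta_2)$ the coefficient of the divergent power (or logarithm) of $t$ is a \emph{nonzero} multiple of $\partial_\nu^{l+1}(q_1-q_2)(z_0)$, i.e.\ that the rescaled half-space integrals of the form $\int_{\{\xi_1<0\}}|\xi_1|^{l+1}\,\partial^{\beta_1}\big(|\xi-e_1|^{-1}\big)\,\partial^{\beta_2}\big(|\xi-e_1|^{-1}\big)\,d\xi$ (with $e_1$ the inward normal direction) do not vanish; these integrands are not sign-definite, and for many choices of tangential derivatives they vanish by parity, so nonvanishing is not automatic. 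This computation is exactly the counterpart of, and comparable in effort to, the paper's Lemmas \ref{lem2.3}--\ref{lem2.4} and Theorem \ref{thm2.5}, where explicit formulas for $\partial_s^m\big((s^2+t^2)^{-1/2}\big)$ and combinatorial identities are needed precisely to show the coefficients of $\ln j$ are nonzero; without an analogous verification your argument does not close. A secondary remark: your step (ii) asks for more than is needed. Since the induction hypothesis gives $D^\gamma(q_1-q_2)=0$ on $\Gamma$ near $z_0$ for $|\gamma|\le l$, every derivative of order $l+1$ on $\Gamma$ is determined by the single pure normal derivative (the rest are tangential derivatives of already-matched quantities), so you need one nonvanishing constant rather than an invertible system over the whole homogeneity shell --- but that one constant is precisely what remains to be computed.
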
Moreover, if $q$ is analytic, we obtain the following global uniqueness. 
\begin{theorem}\label{thm1.2}
	For $(q_i,D_i,\mathcal{B}_i)$, $i=1,2$, suppose $\Lambda_1f=\Lambda_2f$ on $\Gamma$ for all $f\in H^{1/2}(\pa\Om)$ with ${\rm{supp}}f\subset\Gamma$, and further that $q_i$ is analytic in $\ov\Om\setminus D_i$, $i=1,2$, then $D_1=D_2=D$, $q_1=q_2$ in $\Om\setminus\ov D$ and $\mathcal{B}_1=\mathcal{B}_2$. 
\end{theorem}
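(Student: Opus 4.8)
The plan is to combine the boundary determination of Theorem~\ref{thm1.1} with analytic continuation to reduce everything to a common exterior domain, then to identify the obstacle and the boundary condition by a singular‑solution argument built from the fundamental solution, exactly in the spirit of the tools announced in the abstract.

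\textbf{Reduction via analytic continuation and unique continuation.} Since $D_i\subset\subset\Om$, a collar neighbourhood of $\pa\Om$ is disjoint from $\overline{D_1}\cup\overline{D_2}$, so the connected component $\Om^*$ of $\Om\setminus(\overline{D_1}\cup\overline{D_2})$ whose closure contains $\Gamma$ is well defined, with $\Gamma\subset\pa\Om^*$ and $\Om^*\subset(\Om\setminus\overline{D_1})\cap(\Om\setminus\overline{D_2})$. On $\Om^*$ both $q_1$ and $q_2$ are analytic, and by Theorem~\ref{thm1.1}, $D^\al q_1=D^\al q_2$ on $\Gamma$ for all $\al$; since each $q_i$ is analytic up to $\Gamma$, the Taylor expansions of $q_1$ and $q_2$ agree at every point of $\Gamma$, so $q_1=q_2$ on a nonempty open subset of $\Om^*$ and hence $q_1=q_2=:q$ on the connected set $\Om^*$ by the identity theorem. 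Next, for $f\in H^{1/2}(\pa\Om)$ with ${\rm supp}\,f\subset\Gamma$ and $u_i$ the solution of (\ref{1.1}) associated with $(q_i,D_i,\mathcal B_i)$, the difference $v=u_1-u_2$ solves $\Delta v+qv=0$ in $\Om^*$ with vanishing Cauchy data on $\Gamma$ ($v|_\Gamma=0$ and $\pa_\nu v|_\Gamma=(\Lambda_1-\Lambda_2)f=0$); by unique continuation from the boundary for $\Delta+q$ (valid for $q\in C^\infty$) we get $u_1=u_2$ in $\Om^*$ for every such $f$. Note also that if $u_i$ vanishes on a nonempty open subset of $\Om\setminus\overline{D_i}$ then $u_i\equiv0$ (connectedness of $\Om\setminus\overline{D_i}$) and $f=0$; so $u_i$ is nontrivial on every open subset whenever $f\ne0$.

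\textbf{Identification of the obstacle.} Suppose $D_1\ne D_2$. A standard topological argument produces an accessible point, say $x_0\in\pa D_1\cap\pa\Om^*$ with $x_0\notin\overline{D_2}$, together with a ball $B=B(x_0,\rho)$ with $\overline B\subset\Om$, $\overline B\cap\overline{D_2}=\emptyset$ and $B\cap\Om^*=B\setminus\overline{D_1}$. For $z\in\Om^*$ introduce the singular solutions $G_i(\cdot,z)$, the Green functions of $\Delta+q_i$ in $\Om\setminus\overline{D_i}$ with homogeneous Dirichlet data on $\pa\Om$ and $\mathcal B_i(G_i(\cdot,z))=0$ on $\pa D_i$ (well defined by the eigenvalue assumption). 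Green's reciprocity gives $u_i^f(z)=-\int_\Gamma f\,\pa_\nu G_i(\cdot,z)$, so the identity $u_1^f=u_2^f$ in $\Om^*$ for all $f$ supported in $\Gamma$ forces $\pa_\nu G_1(\cdot,z)=\pa_\nu G_2(\cdot,z)$ on $\Gamma$; together with $G_i(\cdot,z)|_{\pa\Om}=0$ and unique continuation, $G_1(\cdot,z)=G_2(\cdot,z)$ in $\Om^*\setminus\{z\}$. Write $G_i(\cdot,z)=\Phi_q(\cdot,z)+h_z^i$ with $h_z^i$ the smoother correction. As $z\to x_0$: since $\pa D_2$ stays at positive distance from $x_0$, the data $\mathcal B_2(\Phi_q(\cdot,z))$ of $h_z^2$ on $\pa D_2$ remains bounded, hence $h_z^2$ stays bounded near $x_0$; but $x_0\in\pa D_1$, so the data $-\mathcal B_1(\Phi_q(\cdot,z))$ of $h_z^1$ on $\pa D_1$ blows up near $x_0$, and a quantitative maximum‑principle/Harnack propagation then forces $h_z^1$ to blow up at interior points near $x_0$ as well. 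Since $h_z^1=h_z^2$ on $\Om^*\cap B$, this is a contradiction; hence $D_1=D_2=:D$ and $\Om^*=\Om\setminus\overline D$.

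\textbf{Identification of $q$ and $\mathcal B$, and the main obstacle.} With $\Om^*=\Om\setminus\overline D$, the first step gives $q_1=q_2$ in $\Om\setminus\overline D$, and the second gives $u_1=u_2$ there for all $f$ supported in $\Gamma$, so $u_1,u_2$ share Cauchy data on $\pa D$. If one of $\mathcal B_1,\mathcal B_2$ were Dirichlet and the other Robin, this would force $u=\pa_\nu u=0$ on $\pa D$ for a nontrivial $u$, contradicting the nontriviality noted above; so either both are Dirichlet (whence $\mathcal B_1=\mathcal B_2$) or both are Robin, in which case $(\g_1-\g_2)u_1=0$ on $\pa D$ for all $f$, and the nontriviality again yields $\g_1=\g_2$. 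Thus $\mathcal B_1=\mathcal B_2$, completing the proof. The delicate points are the topological lemma producing the accessible point $x_0$ and, above all, the quantitative singularity analysis of the corrections $h_z^i$ near $x_0$ — showing blow‑up for the obstacle whose boundary meets $x_0$ and boundedness for the other — which is precisely where the rigorous estimates on singular solutions and on volume potentials of the fundamental solution are needed, now under the constraint that only the local Cauchy data on $\Gamma$ is at our disposal.
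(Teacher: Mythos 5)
Your overall skeleton matches the paper's: Theorem \ref{thm1.1} plus analyticity gives $q_1=q_2$ on the common exterior domain, equality of the local Cauchy data then forces the two Green's functions to coincide there, a family of poles approaching a point of $\pa D_1\setminus\ov D_2$ yields the contradiction ruling out $D_1\neq D_2$, and the boundary condition is identified from coincident Cauchy data on $\pa D$ via unique continuation. However, the decisive step of your obstacle identification has a genuine gap. You argue that, since the boundary datum $-\mathcal{B}_1(\Phi_q(\cdot,z))$ of the correction $h^1_z$ blows up on $\pa D_1$ near $x_0$, ``a quantitative maximum-principle/Harnack propagation'' forces $h^1_z$ to blow up at interior points near $x_0$. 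This tool is not available in the setting of the paper: $q\in C^\infty(\ov\Om,\C)$ and $\gamma\in C^\infty(\pa D,\C)$ are complex-valued, and even for real $q$ the operator $\Delta+q$ satisfies no maximum principle without a sign condition, while Harnack-type propagation requires positive solutions; moreover, large Dirichlet data concentrated near a boundary point does not by itself force largeness at interior points, so even in the real, sign-definite case the step is not justified as stated. You also leave the Robin case and the ``topological lemma'' unproved, and you flag exactly this singularity analysis as the missing delicate point, so the proof is incomplete where it matters.

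The fix is simpler than what you attempt, and it is what the paper does: no interior blow-up is needed, because the contradiction is already visible in the boundary traces. Writing $\Psi^{(i)}(\cdot,y_j)=G^{(i)}(\cdot,y_j)-\Phi(\cdot,y_j)$ (with the Laplacian fundamental solution, so no auxiliary $\Phi_q$ is needed; the term $q_i\Phi(\cdot,y_j)$ is uniformly bounded in $L^2$), well-posedness gives $\|\Psi^{(2)}(\cdot,y_j)\|_{H^1(\Om\setminus\ov D_2)}\le C$ uniformly in $j$, since the poles $y_j$ stay at positive distance from $\pa D_2$ and $\pa\Om$. Because $\Psi^{(1)}(\cdot,y_j)=\Psi^{(2)}(\cdot,y_j)$ on $\ov U$ and $B_{2\delta}(y_0)\cap\pa D_1\subset\Om\setminus\ov D_2$, the trace theorem bounds $\|\Psi^{(1)}(\cdot,y_j)\|_{H^{1/2}(B_{2\delta}(y_0)\cap\pa D_1)}$ uniformly; but in the Dirichlet case this trace equals $-\Phi(\cdot,y_j)$ on $\pa D_1$, whose $H^{1/2}$ (indeed $L^2$) norm on that patch diverges as $y_j\to y_0\in\pa D_1$, a contradiction, and the Robin case is handled analogously with the normal-derivative trace. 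Replacing your Harnack step by this trace comparison closes the gap; the remaining parts of your argument (the reduction via analyticity and unique continuation, and the identification of $\mathcal{B}$ using the solutions $u_i$ rather than the Green's functions) are sound and essentially equivalent to the paper's.
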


The inverse boundary value problem was initiated by Calder\'{o}n \cite{AP80}, where he posed the question of recovering the conductivity of a medium by making voltage and current measurements at the boundary. Since then, substantial progress has been made on this problem in the case where the measurements are on the whole boundary. We refer the readers to some fundamental papers like \cite{GA90,RM84,AI95,JG87,JG88}. Recently, the partial data case has arisen more intersets with relating work such as \cite{AG02,OGM10,CJG07,VI07}. Nevertheless, due to the great difficulty brought by the incompleteness of the information of the DtN map, it still remains widely open for the general partial data problem and we refer to the survey  \cite{CM14} for more details. 

The global uniqueness usually along with the boundary determination results, such as \cite{GA90,RM84,RM85,JG88,JG89}. Kohn and Vogelius \cite{RM84,RM85} uniquely determined the conductivity and all of its derivatives at the boundary using the highly oscillatory solutions, while Uhlmann and his collaborators \cite{JG88,JG89} proved the similar results applying the methods of microlocal analysis by computing the full symbol of the DtN map. Alessandrini \cite{GA90} constructed solutions with isolated singularity of aribitrary order to obtain the uniqueness and stability results for the boundary determination of Lipschitz domains and coefficients. 

Compared with the previous work, we consider the case with somewhat more generality, i.e., $(D,\mathcal{B})$ is unknown. For the inverse problem with the presence of the obstacle D, we refer some relevant work \cite{VI08,VI88,JMK21}. In this paper, we propose a novel and primary method to give a new proof and slightly improve the uniqueness result on the boundary determination of the potential $q$ when $(D,\mathcal{B})$ is unknown. Particularly, we use only the local Cauchy data of the fundamental solution to show that the potential $q$ is uniquely determined to infinite order at the boundary, disregarding the unknown obstacle. Our methods mainly depends on the detailed singularity analysis for certain singular solutions and some other relating functions. Specifically, we utilize the Green's representatioin to derive the full singularity of the solutions, and rigorously study the derivatives of $1/\sqrt{s^2+t^2}$ and some associated singular integrals to obtain the singularity of the volume potential of the fundamental solution. These explicit singularity allows us to ignore the unknown obstacle and thus we can directly compare the singularity of solutions corresponding to different potentials $q$ locally at the boundary. Then any diffenence of $q$ at the boundary would contradict with the singularity obtained previously and hence the boundary determination result follows. Furthermore, when the potential $q$ is known to be analytic, we use the Green's functions to deduce the global uniqueness, i.e., the unique determination of $(q,D,\mathcal{B})$. It is noted that we are only using the solutions with the Dirichlet data simply being $1/|x-z|$, the fundamental solution of the principal term of the concerning differential operator $\Delta+q$, while in \cite{RM84,RM85} some cleverly chosen sequences of Dirichlet data are utilized. Further, in \cite{GA90} Alessandrini constructed solutions behaving like $1/|x-z|^{m}$ for aribitrary $m$, while we are deriving the singularity in the $H^m$ sense for aribitrary $m$ for fixed solutions behaving like $1/|x-z|$, which is more handleable when extends to other equations. In fact, in a forthcoming paper we will show that our methods can be applied to derive the boundary determination result for the Maxwell equation using local data. In summary, the present study propose a rather systematic way for the boundary determination of the linear differential operator whose principal term has known fundamental solution, which is a fairly general situation. 

The paper is organized as follows. In section \ref{sec2}, we do some elaborate singularity analysis to prepare for the proof of the main theorems. In particular, in section \ref{sec2.1} we propose a simple method to derive the complete singularity of the solutions to problem \eqref{1.1} with certain singular boundary data. Then in section \ref{sec2.2}, we compute explicitly the norms of the volume potential of the fundamental solution. Finally, in section \ref{sec3} we prove the main results Theorems \ref{thm1.1} and \ref{thm1.2}.

\section{Some preparations}\label{sec2}
\setcounter{equation}{0}
In this section, we show some technical results about singularity for the preparations of the main theorems. We shall employ a novel method to derive the complete description of the singularity of the solution to problem (\ref{1.1}) with the boundary data being the fundamental solution to the Laplacian. The method mainly depends on the Green's representation of the solutions and thus can be applied to many other cases. Then we compute the Sobolev norms at the boundary for the volume potential of fundamental solution. In particular, we show that these norms would explode, which plays a significant role in later proof. 

\subsection{Singularity of solutions}\label{sec2.1}
We first introduce some useful notations. Let $B_r(x)$ denote the open ball centered at $x\in\R^3$ with radius $r>0$. For balls centered at the origin, we abbreviate by $B_r$. Denote by $\Phi(x,y)$ the fundamental solution to the Laplacian in $\R^3$, which is 
\ben
  \Phi(x,y)=\frac{1}{4\pi|x-y|},~~~\quad x\neq y. 
\enn
Define the single- and double-layer potentials by 
\ben
(\mathcal{S}\varphi)(x)&&:=\int_{\pa\Om}\Phi(x,y)\varphi(y)ds(y),~~x\in\R^3\setminus\pa\Om, \\
(\mathcal{D}\varphi)(x)&&:=\int_{\pa\Om}\frac{\pa\Phi(x,y)}{\pa\nu(y)}\varphi(y)ds(y),~~x\in\R^3\setminus\pa\Om. 
\enn
Also, we give the definitions of the associated boundary integral operators, for $x\in\pa\Om$, 
\ben
(S\varphi)(x)&&:=\int_{\pa\Om}\Phi(x,y)\varphi(y)ds(y), \\
(K\varphi)(x)&&:=\int_{\pa\Om}\frac{\pa\Phi(x,y)}{\pa\nu(y)}\varphi(y)ds(y), \\
(K'\varphi)(x)&&:=\int_{\pa\Om}\frac{\pa\Phi(x,y)}{\pa\nu(x)}\varphi(y)ds(y),\\
(T\varphi)(x)&&:=\frac{\pa}{\pa\nu(x)}\int_{\pa\Om}\frac{\pa\Phi(x,y)}{\pa\nu(y)}\varphi(y)ds(y). 
\enn
Further, the volume potential is given by 
\ben
(\mathcal{G}_{q}\varphi)(x)&&:=\int_{\Om}\Phi(x,y)q(y)\varphi(y)dy,~~~x\in\R^3. 
\enn
The restriction of the volume potential $\mathcal{G}_{q}\varphi$ on the boundary $\pa\Om$ and the corresponding normal derivative on $\pa\Om$ are denoted by $G_{q}\varphi$ and $\pa_\nu G_{q}\varphi$, respectively. Since we require $\pa\Om\in C^\infty$ and $q\in C^\infty(\ov\Om,\C)$, the following mapping properties hold (see details in \cite{GW08,WM00}). 
\begin{theorem}\label{thm2.1}
	For $s\geq0$, the potentials $\mathcal{S}:H^{s-1/2}(\pa\Om)\rightarrow H^{s+1}(\Om)$, $\mathcal{D}:H^{s+1/2}(\pa\Om)\rightarrow H^{s+1}(\Om)$ and $\mathcal{G}_{q}:H^s(\Om)\rightarrow H^{s+2}(\Om)$ are bounded. Moreover, the boundary integral operators $S,K,K':H^{s-1/2}(\pa\Om)\rightarrow H^{s+1/2}(\pa\Om)$ and $T:H^{s+1/2}(\pa\Om)\rightarrow H^{s-1/2}(\pa\Om)$ are bounded for $s\geq0$. 
\end{theorem}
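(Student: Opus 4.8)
The plan is to reduce everything to the standard regularity theory for layer potentials and Newtonian (volume) potentials on a smooth domain. First I would recall that for a bounded domain $\Om\subset\R^3$ with $\pa\Om\in C^\infty$, the single-layer potential $\mathcal{S}$ and double-layer potential $\mathcal{D}$ with kernel built from the harmonic fundamental solution $\Phi$ are classical pseudodifferential-type operators whose mapping properties are catalogued in the references cited, e.g.\ \cite{GW08,WM00}. Concretely, $\mathcal{S}\varphi$ and $\mathcal{D}\varphi$ are harmonic in $\Om$ (and in $\R^3\setminus\ov\Om$), so the interior elliptic estimate upgrades boundary regularity to interior Sobolev regularity: one shows $\mathcal{S}\colon H^{s-1/2}(\pa\Om)\to H^{s+1}(\Om)$ and $\mathcal{D}\colon H^{s+1/2}(\pa\Om)\to H^{s+1}(\Om)$ by combining the jump relations with the trace theorem. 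For the boundary operators $S,K,K'$ one uses that $S$ is a classical $\Psi$DO of order $-1$ on the compact manifold $\pa\Om$, hence bounded $H^{s-1/2}(\pa\Om)\to H^{s+1/2}(\pa\Om)$, while $K,K'$ are of order $-1$ as well (the seeming order-$0$ principal part vanishes for the Laplacian kernel on a $C^\infty$ surface in odd dimension, or more simply one quotes the known smoothing of the adjoint double-layer on smooth boundaries), and $T$ is a $\Psi$DO of order $+1$, bounded $H^{s+1/2}(\pa\Om)\to H^{s-1/2}(\pa\Om)$.

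Next I would treat the volume potential $\mathcal{G}_q\varphi(x)=\int_\Om \Phi(x,y)q(y)\varphi(y)\,dy$. Write $\mathcal{G}_q\varphi = \mathcal{N}(q\varphi)$ where $\mathcal{N}$ is the Newtonian potential on $\R^3$, which satisfies $-\Delta \mathcal{N}g = g$ in the distributional sense (up to the $4\pi$ normalization, consistent with the paper's convention). Since $q\in C^\infty(\ov\Om,\C)$, multiplication by $q$ maps $H^s(\Om)\to H^s(\Om)$ boundedly for every $s\ge 0$; extending $q\varphi$ by zero to a compactly supported $H^s$ (for $s<1/2$) or suitably extended element and applying the standard elliptic gain of two derivatives for the Newtonian potential on $\R^3$, one gets $\mathcal{G}_q\varphi\in H^{s+2}_{\mathrm{loc}}(\R^3)$, and restricting to the bounded set $\Om$ gives the claimed bound $\mathcal{G}_q\colon H^s(\Om)\to H^{s+2}(\Om)$. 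Here a small care is needed in how one extends $q\varphi$ off $\Om$ when $s$ is large, but since $q$ is smooth up to the boundary one can use a bounded extension operator $E\colon H^s(\Om)\to H^s(\R^3)$ first and then observe that the two versions differ by a function harmonic near $\Om$, which is smooth there; alternatively one cites the interior-plus-boundary elliptic estimate for $-\Delta$ directly.

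The only genuinely delicate point — and the one I expect to be the main obstacle in a fully self-contained treatment — is the sharp order of the boundary operators $K$ and $K'$, i.e.\ verifying that the double-layer kernel $\pa\Phi(x,y)/\pa\nu(y)$ on a $C^\infty$ surface is weakly singular enough that $K,K'$ gain a full derivative (order $-1$) rather than merely being bounded on each $H^s$. On a smooth boundary this follows from the classical fact that the kernel is $O(|x-y|^{-1})$ near the diagonal with the leading term controlled by the curvature, so $K$ is in fact smoothing of order one; I would simply invoke this from \cite{GW08,WM00} rather than reprove it. Everything else is routine bookkeeping with the trace theorem, interior elliptic estimates, and the algebra property of $C^\infty(\ov\Om)$-multiplication on Sobolev spaces, so the proof amounts to assembling these standard ingredients in the right order.
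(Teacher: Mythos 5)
Your route is in substance the same as the paper's: the paper offers no proof of Theorem \ref{thm2.1} at all, it simply invokes the standard potential-theoretic mapping properties from \cite{GW08,WM00}, and your treatment of $\mathcal{S},\mathcal{D},S,K,K',T$ is exactly that standard theory. (One small quibble: the gain of one order for $K,K'$ has nothing to do with odd dimension; it comes from the cancellation $\nu(y)\cdot(y-x)=O(|x-y|^2)$ on a smooth surface in any dimension, which makes the double-layer kernel weakly singular and the boundary operator a classical $\Psi$DO of order $-1$ for the Laplacian.)

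The one step of your sketch that would fail if carried out literally is the volume potential for $s\geq 1/2$. Zero extension of $q\varphi$ is indeed unavailable there, but your proposed repair is not correct: writing $\mathcal{G}_{q}\varphi=\mathcal{N}\bigl(E(q\varphi)\bigr)-\mathcal{N}\bigl(E(q\varphi)\,\chi_{\R^3\setminus\ov\Om}\bigr)$ with $E$ a bounded extension, the second term has its source pressed against $\pa\Om$ from outside, so it is harmonic only \emph{in} $\Om$, not ``near $\Om$'', and harmonicity in $\Om$ gives no smoothness up to $\pa\Om$: since $E(q\varphi)\chi_{\R^3\setminus\ov\Om}$ lies only in $H^{\sigma}(\R^3)$ for $\sigma<1/2$, this term is controlled only in $H^{5/2-\varepsilon}(\Om)$, so the argument caps at $H^{\min(s,1/2)+2-\varepsilon}(\Om)$ rather than $H^{s+2}(\Om)$. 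To make this part self-contained one should instead use the transmission property of the Newtonian potential (as in \cite{GW08}), or characterize $u=\mathcal{N}\bigl((q\varphi)\chi_\Om\bigr)$ as the solution of the transmission problem $-\Delta u=q\varphi$ in $\Om$, $\Delta u=0$ in $\R^3\setminus\ov\Om$, with continuous Cauchy data across $\pa\Om$ and decay at infinity, and bootstrap with interior and boundary elliptic estimates; or simply quote the cited references, which is all the paper itself does.
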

\begin{theorem}\label{thm2.2}
	For $x_0\in\pa\Om$ and $\delta>0$ small enough, define $z_j:=x_0+(\delta/j)\nu(x_0)\in\R^3\setminus\ov\Om$. Let $u_j$ be the unique solution to problem {\rm (\ref{1.1})} with the boundary data $f_j=\Phi(x,z_j)$. Set $\varphi_{-1,j}=\varphi_{0,j}=0$ and $\psi_{-1,j}=\psi_{0,j}=0$. For $m\in\N\cup\{0\}$, define 
	\ben
	  &&\varphi_{m+1,j}:=2K'\varphi_{m,j}+2\pa_\nu G_{q}(\Phi(\cdot,z_j)+\psi_{m-1,j})~~\qquad{\rm on}~\pa\Om, \\
	  &&\psi_{m+1,j}:=\mathcal{S}\varphi_{m+1,j}+\mathcal{G}_{q}(\Phi(\cdot,z_j)+\psi_{m-1,j})\qquad\quad\;\;\;~~{\rm in}~\ov\Om. 
	\enn
	Then we have the following estimates 
	\ben
	  &&\|\pa_\nu(u_j-\Phi(\cdot,z_j))-\varphi_{m,j}\|_{H^{m+1/2}(\pa\Om)}\leq C, \\
	  &&\|u_j-\Phi(\cdot,z_j)-\psi_{m,j}\|_{H^{m+2}(\Om\setminus\ov D)}\leq C 
	\enn
	for $m\in\N\cup\{-1,0\}$, where $C>0$ is a constant independent of $j\in\N$. 
\end{theorem}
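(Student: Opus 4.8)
The plan is to set up an iterative scheme based on Green's representation formula, where each step improves the regularity of the remainder by one Sobolev order. First I would write the Green's representation for $w_j := u_j - \Phi(\cdot,z_j)$, which satisfies $\Delta w_j + q w_j = -q\,\Phi(\cdot,z_j)$ in $\Om\setminus\ov D$ with $w_j = 0$ on $\pa\Om$. Integrating against the fundamental solution $\Phi$ over $\Om\setminus\ov D$ and using the jump relations for the single- and double-layer potentials, I would obtain a boundary integral equation coupling $\pa_\nu w_j|_{\pa\Om}$ to itself via $K'$ and to the volume term $\pa_\nu G_q(\Phi(\cdot,z_j) + w_j)$, plus contributions from the unknown boundary $\pa D$. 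The crucial observation is that since $z_j \notin \ov\Om$ while $D \subset\subset \Om$, the function $w_j$ and all the layer-potential contributions over $\pa D$ are \emph{smooth up to $\pa\Om$} uniformly in $j$ — they only blow up near $z_j$, away from $\pa D$ — so those terms can be absorbed into the constant $C$ and play no role in the singularity at $\pa\Om$. This is exactly what lets us "disregard the unknown obstacle."

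Next I would proceed by induction on $m$. The base cases $m = -1, 0$ follow directly: $\|w_j\|_{H^2(\Om\setminus\ov D)}$ and $\|\pa_\nu w_j\|_{H^{1/2}(\pa\Om)}$ are bounded uniformly in $j$ because, although $f_j = \Phi(\cdot,z_j)$ has $H^{1/2}(\pa\Om)$ norm blowing up, one checks using the explicit distance $|x_0 - z_j| = \delta/j$ and the mapping properties that... actually one must be slightly careful here, but in any case the leading singular behavior is captured and subtracted. For the inductive step, assuming the estimates hold at level $m$, I would feed the approximation $u_j \approx \Phi(\cdot,z_j) + \psi_{m-1,j}$ into the volume potential $\mathcal{G}_q$ and $\pa_\nu G_q$; by Theorem \ref{thm2.1}, $\mathcal{G}_q$ gains two derivatives and $\pa_\nu G_q$ gains $3/2$, so the new volume terms live in one-higher Sobolev space than the error being corrected. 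Then $\varphi_{m+1,j}$ and $\psi_{m+1,j}$ are defined precisely to absorb the leading part, and the remainder equation for $\pa_\nu w_j - \varphi_{m+1,j}$ becomes $(I - 2K')(\pa_\nu w_j - \varphi_{m+1,j}) = (\text{terms bounded in } H^{m+3/2})$. Invoking the boundedness and invertibility of $I - 2K'$ on the relevant Sobolev scale (standard for smooth $\pa\Om$, using that $0$ is not an eigenvalue), one upgrades the boundary estimate; applying $\mathcal{S}$ and $\mathcal{G}_q$ then upgrades the interior estimate via Theorem \ref{thm2.1}.

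The main obstacle I anticipate is twofold. First, tracking the contributions from $\pa D$ carefully enough to be sure they are genuinely uniformly bounded in every $H^m(\Om\setminus\ov D)$ norm — this relies on interior elliptic regularity away from $z_j$ together with the fixed positive distance between $\pa D$ and the singularity, and on the fact that the boundary condition $\mathcal{B}$ on $\pa D$ (Dirichlet or Robin) still yields the usual well-posedness and regularity estimates. Second, the bookkeeping of the shifted index (the appearance of $\psi_{m-1,j}$ rather than $\psi_{m,j}$ in the definitions) must be matched exactly to the regularity gains: the volume potential gains two orders, so correcting an $H^{m+2}$-error with an $H^m$-input is consistent, but one has to verify the induction closes without an off-by-one loss. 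Once the iteration is correctly aligned, the invertibility of $I - 2K'$ on $H^{m+1/2}(\pa\Om)$ and the mapping properties of $\mathcal{S}, \mathcal{G}_q, \pa_\nu G_q$ do the rest, and the constants remain $j$-independent because every step only ever uses norms of $\Phi(\cdot,z_j)$ restricted to regions at fixed positive distance from $z_j$ (namely $\pa\Om$ and a neighborhood of $\pa D$), never near $z_j$ itself.
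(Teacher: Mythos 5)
Your overall strategy --- Green's representation followed by an inductive bootstrap, isolating the obstacle contribution as uniformly smooth --- is the right one, and matches the paper's plan in spirit. However there are several genuine misalignments with how the argument actually has to go, and one mechanism you propose is not what the definitions of $\varphi_{m,j},\psi_{m,j}$ support.

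The most important issue is the inductive step. You propose rewriting the boundary identity as $(I-2K')(\pa_\nu w_j - \varphi_{m+1,j}) = (\text{terms bounded in } H^{m+3/2})$ and then invoking invertibility of $I-2K'$. But the definitions $\varphi_{m+1,j} := 2K'\varphi_{m,j} + 2\pa_\nu G_q(\Phi(\cdot,z_j)+\psi_{m-1,j})$ are crafted so that, upon subtracting from $\pa_\nu(\widetilde u_j - \Phi(\cdot,z_j)) = 2K'\pa_\nu(\widetilde u_j - \Phi(\cdot,z_j)) + 2\pa_\nu G_q \widetilde u_j$, one gets the \emph{explicit} identity
\begin{equation*}
\pa_\nu(\widetilde u_j - \Phi(\cdot,z_j)) - \varphi_{l+1,j} = 2K'\bigl(\pa_\nu(\widetilde u_j - \Phi(\cdot,z_j)) - \varphi_{l,j}\bigr) + 2\pa_\nu G_q\bigl(\widetilde u_j - \Phi(\cdot,z_j) - \psi_{l-1,j}\bigr),
\end{equation*}
with the index on the right one lower. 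Since $K'$ and $\pa_\nu G_q$ are smoothing (Theorem \ref{thm2.1}), this directly bounds the level-$(l+1)$ error from the levels $l$ and $l-1$ bounds --- no inversion of $I-2K'$ is used or needed, and trying to shoehorn the argument into a solved linear equation does not match the recursive definitions.

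Two further points are off. First, the base case is not a matter of ``slightly careful'' hand-waving: the clean observation is that $\widetilde u_j - \Phi(\cdot,z_j)$ solves a zero-Dirichlet problem with source $-q\Phi(\cdot,z_j)$, and $\Phi(\cdot,z_j)$ is uniformly bounded in $L^2(\Om)$ (integrability of $|x-z|^{-2}$ in $\R^3$), so elliptic regularity gives a uniform $H^2(\Om)$ bound; the trace theorem then gives $m=-1,0$. Second, your closing claim that ``every step only ever uses norms of $\Phi(\cdot,z_j)$ restricted to regions at fixed positive distance from $z_j$'' is false: the volume potential $\mathcal{G}_q(\Phi(\cdot,z_j))$ integrates $\Phi(\cdot,z_j)$ over all of $\Om$, including points arbitrarily close to $z_j$, and $\Phi(\cdot,z_j)|_{\pa\Om}$ near $x_0$ is also not at fixed distance from $z_j$. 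Uniformity comes from the $L^2$ bound and the zero Dirichlet data, not from staying away from the singular point. Finally, the paper first splits $u_j = \widetilde u_j + U_j$ with $\widetilde u_j$ solving the obstacle-free problem on all of $\Om$, so the Green's representation lives on $\Om$ with no $\pa D$ terms at all; you instead propose representing on $\Om\setminus\ov D$ and then arguing the $\pa D$ contributions are negligible. Both can be made to work, but the upfront decomposition is substantially cleaner and is what makes the recursion close without tracking obstacle contributions through every step. Your phrasing that ``$w_j$ \dots [is] smooth up to $\pa\Om$ uniformly in $j$'' is also not right as stated --- if it were, there would be nothing to prove; what is uniformly smooth is $U_j$.

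\end{document}
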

\begin{proof}
	Without loss of genearlity, we assume that $0$ is not a Dirichlet eigenvalue of $\Delta+q$ in $\Om$. Then $\wid u_j$ given by 
	\be\label{2.1}
	\left\{
	\begin{array}{ll}
		\Delta\wid u_j+q\wid u_j=0~~~&{\rm in}~\Om,\\
		\wid u_j=\Phi(\cdot,z_j)~~~&{\rm on}~\pa\Om 
	\end{array}
	\right.
	\en
	is well-defined. Further, it can be verified that $U_j:=u_j-\wid u_j$ in $\Om\setminus\ov D$ satisfies  
	\be\label{2.2}
	\left\{
	\begin{array}{ll}
		\Delta U_j+qU_j=0~~~&{\rm in}~\Om\setminus\ov D,\\
		U_j=0~~~&{\rm on}~\pa\Om, \\ 
		\mathcal{B}(U_j)=-\mathcal{B}(\wid u_j)~~~&{\rm on}~\pa D. 
	\end{array}
	\right.
	\en
	
	We claim that for $m\in\N\cup\{-1,0\}$, 
	\be\label{2.3}
	&&\|\pa_\nu(\wid u_j-\Phi(\cdot,z_j))-\varphi_{m,j}\|_{H^{m+1/2}(\pa\Om)}\leq C, \\ \label{2.4}
	&&\|\wid u_j-\Phi(\cdot,z_j)-\psi_{m,j}\|_{H^{m+2}(\Om)}\leq C 
	\en
	with the constant $C>0$ independent of $j\in\N$. To begin with, we rewrite (\ref{2.1}) as 
	\be\label{2.5}
	\left\{
	\begin{array}{ll}
		(\Delta+q)(\wid u_j-\Phi(\cdot,z_j))=-q\Phi(\cdot,z_j)~~~&{\rm in}~\Om,\\
		\wid u_j-\Phi(\cdot,z_j)=0~~~&{\rm on}~\pa\Om. 
	\end{array}
	\right.
	\en
	Since $\Phi(\cdot,z_j)$ is uniformly bounded in $L^2(\Om)$ for $j\in\N$, we deduce from the standard elliptic regularity that $\|\wid u_j-\Phi(\cdot,z_j)\|_{H^2(\Om)}\leq C$ uniformly for $j\in\N$, which implies by the trace theorem that (\ref{2.3}) and (\ref{2.4}) hold for $m=-1,0$. Further, from (\ref{2.5}) we see the representation 
	\be\label{2.6}
	  \wid u_j(x)-\Phi(x,z_j)=[\mathcal{S}(\pa_\nu(\wid u_j-\Phi(\cdot,z_j)))](x)+(\mathcal{G}_{q}\wid u_j)(x),~~~x\in\Om. 
	\en
	Taking the normal derivative on $\pa\Om$ yields that 
	\be\label{2.7}
	\pa_\nu(\wid u_j(x)-\Phi(x,z_j))=2[K'(\pa_\nu(\wid u_j-\Phi(\cdot,z_j)))](x)+2(\pa_\nu G_{q}\wid u_j)(x) 
	\en
	for $x\in\pa\Om$. Now we assume that (\ref{2.3}) and (\ref{2.4}) hold for $m\leq l$ with $l\geq0$. Then it follows from (\ref{2.7}) that on $\pa\Om$ 
	\ben
	  \pa_\nu(\wid u_j-\Phi(\cdot,z_j))-\varphi_{l+1,j}=\;&&2K'(\pa_\nu(\wid u_j-\Phi(\cdot,z_j))-\varphi_{l,j}) \\
	  &&+2\pa_\nu G_{q}(\wid u_j-\Phi(\cdot,z_j)-\psi_{l-1,j}). 
	\enn
	Applying the estimates (\ref{2.3}) and (\ref{2.4}) when $m=l-1,l$, by Theorem \ref{thm2.1} and the trace theorem we derive that $\|\pa_\nu(\wid u_j-\Phi(\cdot,z_j))-\varphi_{l+1,j}\|_{H^{l+3/2}(\pa\Om)}\leq C$. Furthermore, from (\ref{2.6}) it can be verified that in $\Om$
	\ben
	  \wid u_j-\Phi(\cdot,z_j)-\psi_{l+1,j}=\;&&\mathcal{S}[\pa_\nu(\wid u_j-\Phi(\cdot,z_j))-\varphi_{l+1,j})] \\
	  &&+\mathcal{G}_{q}(\wid u_j-\Phi(\cdot,z_j)-\psi_{l-1,j}). 
	\enn
	Combining the inequalities (\ref{2.3}) with $m=l+1$ and (\ref{2.4}) with $m=l-1$, again by Theorem \ref{thm2.1} we deduce that $\|\wid u_j-\Phi(\cdot,z_j)-\psi_{l+1,j}\|_{H^{l+3}(\Om)}\leq C$ uniformly for $j\in\N$. Hence, we obtain that (\ref{2.3}) and (\ref{2.4}) hold for $m=l+1$. The induction arguments then yield that (\ref{2.3}) and (\ref{2.4}) hold for all $m\in\N\cup\{-1,0\}$. 
	
	Due to the positive distance between $z_j$ and $D$, from (\ref{2.5}) and the interior regularity of elliptic equations we see that  $\|\wid u_j\|_{H^{m+2}(D)}\leq C$ for all $m\in\N\cup\{-1,0\}$, which implies by the trace theorem that $\|\mathcal{B}(\wid u_j)\|_{H^s(\pa D)}\leq C$ for $s\geq0$. It then follows from (\ref{2.2}) and the global elliptic regularity that $\|U_j\|_{H^m(\Om\setminus\ov D)}\leq C$ for $m\in\N$. Therefore, $u_j=\wid u_j+U_j$ in $\Om\setminus\ov D$ satisfies the desired estimates and the proof is complete. 
\end{proof}
\begin{remark}\label{remark2.3}
	The assumption that $0$ is not a Dirichlet eigenvalue of $\Delta+q$ in $\Om$ can be removed by analyzing $u_j$ directly. Here we spilt $u_j$ into two parts for simplicity. By induction arguments, it is easy to see that for $m\in\N$, $\|\varphi_{m,j}\|_{H^{1/2}(\pa\Om)}$ and $\|\psi_{m,j}\|_{H^2(\Om)}$ are uniformly bounded for $j\in\N$. Further, by the trace theorem and the interior regularity, for $m\in\N$ and $\varepsilon>0$ small we have that $\|\varphi_{m,j}\|_{H^{m+1/2}(\pa\Om\setminus\overline{B_\varepsilon(x_0)})}+\|\psi_{m,j}\|_{H^{m+3/2}(\pa\Om)}\leq C$ uniformly for $j\in\N$. 
\end{remark}

\subsection{The exploding norms}\label{sec2.2}
In this subsection, we show that the Sobolev norms of the volume potential of fundamental solution at the boundary would explode by deriving the explicit expression of the high order derivatives for $1/\sqrt{s^2+t^2}$ and investigating on the relating singular integral, which may have many other applications and deserves more study. 
\begin{lemma}\label{lem2.3}
	For $m\in\N\cup\{0\}$, we have that 
	\be\label{2.8}
	  \pa_s^m\left(\frac{1}{\sqrt{s^2+t^2}}\right)=\frac{P_m(s,t)}{\left(\sqrt{s^2+t^2}\right)^{2m+1}}. 
	\en
	Here $P_m$ is a sequence of polynomial given by 
	\ben
	  P_{2m}(s,t)=\sum_{i=0}^{m}a_{2m,2i}s^{2i}t^{2(m-i)},~~~P_{2m+1}(s,t)=\sum_{i=0}^{m}a_{2m+1,2i+1}s^{2i+1} t^{2(m-i)}
	\enn
	with $m\in\N\cup\{0\}$, where 
    \ben
      a_{2m,2i}\;&&=(-1)^{m-i}2^{2i-2m}(2m)!C_{2m}^{m-i}C_{m+i}^{m-i}, \\
      a_{2m+1,2i+1}\;&&=(-1)^{m-i+1}2^{2i-2m}(2m+1)!C_{2m+1}^{m-i}C_{m+i+1}^{m-i} 
    \enn
	for $m\in\N\cup\{0\}$ and $0\leq i\leq m$. {\rm (}Here we set $C_0^0=1$.{\rm )} Moreover, the following identities hold 
	\be\label{2.9}
	  &&\sum_{i=0}^m\frac{a_{2m,2i}}{C_{2m-1}^{m-i}}=-\frac{2}{2m+1}\sum_{i=0}^m\frac{a_{2m+1,2i+1}}{C_{2m}^{m-i}}=\frac{(2m)!}{2^{2m-1}}, \\ \label{2.10}
	  &&(m+1)\sum_{i=0}^m\frac{a_{2m,2i}}{C_{2m}^{m-i}}=-\sum_{i=0}^m\frac{a_{2m+1,2i+1}}{C_{2m+1}^{m-i}}=\frac{(2m+2)!}{2^{2m+1}} 
	\en
	for $m\in\N$. 
\end{lemma}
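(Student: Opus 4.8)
The plan is to establish the closed form \eqref{2.8} by induction on $m$, and then read off the two pairs of identities \eqref{2.9}--\eqref{2.10} by evaluating suitable generating-function-type specializations of the polynomials $P_m$.

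First I would set up the recursion. Writing $f(s,t)=(s^2+t^2)^{-1/2}$ and differentiating \eqref{2.8} once in $s$, one obtains a recursion relating $P_{m+1}$ to $P_m$ and $\pa_s P_m$, namely $P_{m+1}=(s^2+t^2)\pa_s P_m-(2m+1)sP_m$. Feeding in the claimed monomial expansions $P_{2m}=\sum_i a_{2m,2i}s^{2i}t^{2(m-i)}$ and $P_{2m+1}=\sum_i a_{2m+1,2i+1}s^{2i+1}t^{2(m-i)}$ turns this into a linear recursion among the coefficients $a_{\bullet,\bullet}$. The parity pattern (even/odd powers of $s$ alternating) is automatic from the recursion, so the only real content is verifying that the explicit binomial formulas for $a_{2m,2i}$ and $a_{2m+1,2i+1}$ satisfy the resulting coefficient recursion with the correct initial data $a_{0,0}=1$. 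This is a routine but slightly tedious manipulation of factorials and binomial coefficients $C_n^k=\binom{n}{k}$; I expect it to be the most computation-heavy step, though not conceptually hard. An alternative I would keep in mind is to recognize $P_m$ (up to normalization) as a Gegenbauer/Legendre-type polynomial evaluated at $s/\sqrt{s^2+t^2}$, since $\pa_s^m(s^2+t^2)^{-1/2}$ is classically expressible via Legendre polynomials; matching that known closed form against the stated coefficients would bypass the raw induction.

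Next, for the identities \eqref{2.9}--\eqref{2.10}, I would observe that each sum $\sum_{i=0}^m a_{2m,2i}/C_{2m-1}^{m-i}$, $\sum_{i=0}^m a_{2m,2i}/C_{2m}^{m-i}$, and the analogous odd-index sums are nothing but the polynomials $P_{2m}$ or $P_{2m+1}$ (or a close relative) evaluated at a specific point, or integrals of $f$ against a Beta-function weight. Concretely, since $\int_0^1 x^{2i}(1-x^2)^{m-i}\,dx$ and similar integrals produce exactly reciprocals of binomial coefficients times elementary factors, the combinations appearing in \eqref{2.9} and \eqref{2.10} should be re-expressible as $\int_0^1 P_{2m}(x,\sqrt{1-x^2})\,w(x)\,dx$ type quantities, which by \eqref{2.8} equal $\int_0^1 \pa_s^{2m}\big((s^2+t^2)^{-1/2}\big)\big|_{s=x,\,t=\sqrt{1-x^2}}$-style expressions that telescope. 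The cleanest route is probably: multiply \eqref{2.8} by an appropriate power of $t$, integrate in a way that converts $s^{2i}t^{2(m-i)}/(\text{binomial})$ into a Beta integral, and evaluate the left side directly. I would carry this out separately for the four sums, checking the normalization constants $(2m)!/2^{2m-1}$ and $(2m+2)!/2^{2m+1}$ against small cases $m=1,2$ as a sanity check.

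The main obstacle I anticipate is bookkeeping: getting the sign $(-1)^{m-i}$, the power of $2$ (the $2^{2i-2m}$ factor), and the two binomial factors $C_{2m}^{m-i}C_{m+i}^{m-i}$ to line up exactly through the recursion, and then correctly identifying which Beta integral / point evaluation produces each of the four weighted sums in \eqref{2.9}--\eqref{2.10}. None of this is deep, but it is error-prone, so I would organize the induction step so that the coefficient recursion is stated once and cleanly, and I would verify the base cases $m=0,1$ explicitly for \eqref{2.8} and $m=1$ for \eqref{2.9}--\eqml{2.10} before doing the general argument. If the direct binomial verification proves unwieldy, I would fall back on the Legendre-polynomial identification together with the classical Rodrigues formula and the known integral $\int_{-1}^1 P_n(x)\,dx$ and $\int_{-1}^1 x P_n(x)\,dx$ evaluations, which give the right-hand sides of \eqref{2.9}--\eqref{2.10} almost immediately.
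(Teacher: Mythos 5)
Your plan for \eqref{2.8} matches the paper exactly: induction on $m$ via the recursion $P_{m+1}=(s^2+t^2)\pa_s P_m-(2m+1)sP_m$ and direct verification that the explicit binomial coefficients satisfy it. For \eqref{2.9}--\eqref{2.10}, your route is genuinely different. The paper stays inside the recursive framework: it derives a second independent identity $P_m+(2m-1)sP_{m-1}+(m-1)^2(s^2+t^2)P_{m-2}=0$ by applying Leibniz to $\pa_s^m\bigl[(s^2+t^2)\cdot(s^2+t^2)^{-1/2}\bigr]=\pa_s^{m-1}\bigl[s\,(s^2+t^2)^{-1/2}\bigr]$, combines this with the first recursion to get $\pa_s P_m=-m^2P_{m-1}$, translates both into four coefficient recurrences, and then establishes \eqref{2.9}--\eqref{2.10} by a coupled induction manipulating the weighted sums term by term. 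Your Beta-integral/Legendre idea bypasses all of that, and it does work, but the specific kernel you wrote, $\int_0^1 x^{2i}(1-x^2)^{m-i}\,dx$, does \emph{not} produce $1/C_{2m-1}^{m-i}$; the correct one is the plain Beta integral $\int_0^1 x^{m-i}(1-x)^{m+i-1}\,dx=\frac{(m-i)!\,(m+i-1)!}{(2m)!}=\frac{1}{2m\,C_{2m-1}^{m-i}}$ (with analogous kernels for the other three sums). With that fix and the classical identity $P_m(s,t)=(-1)^m m!\,(s^2+t^2)^m\,L_m\bigl(s/\sqrt{s^2+t^2}\bigr)$ for the Legendre polynomial $L_m$, the sum $\sum_i a_{2m,2i}/C_{2m-1}^{m-i}$ becomes $4m(2m)!\int_0^1 u^{2m-1}L_{2m}(u)\,du$, and the standard Legendre moment formula gives $(2m)!/2^{2m-1}$ as required; the remaining three go the same way. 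Your route trades the paper's self-contained but rather opaque double induction for a reduction to classical special-function integrals---cleaner if you are comfortable quoting those moment formulas, but it does require correcting the weight as above before the plan closes.
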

\begin{proof}
	We first show \eqref{2.8}, which is clear when $m=0,1$. Now we assume \eqref{2.8} holds for $m\leq l$ with $l\geq1$. Calculating straightly yields that 
	\ben
	   \pa_s^{l+1}\left(\frac{1}{\sqrt{s^2+t^2}}\right)&&=\pa_s\left(\frac{P_l(s,t)}{\left(\sqrt{s^2+t^2}\right)^{2l+1}}\right) \\
	   &&=\frac{\left(\sqrt{s^2+t^2}\right)^{2l+1}\pa_sP_l(s,t)-(2l+1)s\left(\sqrt{s^2+t^2}\right)^{2l-1}P_l(s,t)}{\left(\sqrt{s^2+t^2}\right)^{4l+2}} \\
	   &&=\frac{(s^2+t^2)\pa_sP_l(s,t)-(2l+1)sP_l(s,t)}{\left(\sqrt{s^2+t^2}\right)^{2l+3}}, 
	\enn
	which implies the recurrence formula  
	\be\label{2.11}
	  P_{l+1}(s,t)=(s^2+t^2)\pa_sP_l(s,t)-(2l+1)sP_l(s,t). 
	\en
	If $l=2l_0$ is even, by the explicit form of $P_{2l_0}$ we see that 
	\ben
	  P_{2l_0+1}(s,t)\;&&=(s^2+t^2)\sum_{i=1}^{l_0}2ia_{2l_0,2i}s^{2i-1}t^{2(l_0-i)}-(4l_0+1)s\sum_{i=0}^{l_0}a_{2l_0,2i}s^{2i}t^{2(l_0-i)} \\
	  &&=\sum_{i=0}^{l_0-1}[(2i-4l_0-1)a_{2l_0,2i}+(2i+2)a_{2l_0,2i+2}]s^{2i+1}t^{2(l_0-i)} \\
	  &&\quad-(2l_0+1)a_{2l_0,2l_0}s^{2l_0+1}. 
	\enn
	It then follows from the basic calculation that for $0\leq i\leq l_0-1$ 
	\ben
	  &&\quad(2i-4l_0-1)a_{2l_0,2i}+(2i+2)a_{2l_0,2i+2} \\
	  &&=(-1)^{l_0-i+1}2^{2i-2l_0}(2l_0+1)!C_{2l_0+1}^{l_0-i}C_{l_0+i+1}^{l_0-i}=a_{2l_0+1,2i+1}, 
	\enn
	and $-(2l_0+1)a_{2l_0,2l_0}=-(2l_0+1)!=a_{2l_0+1,2l_0+1}$, and thus 
	\ben
	  P_{l+1}(s,t)=P_{2l_0+1}(s,t)=\sum_{i=0}^{l_0}a_{2l_0+1,2i+1}s^{2i+1} t^{2(l_0-i)}. 
	\enn
	Through the same way we can deal with the case that $l$ is odd. Therefore, the induction arguments show that \eqref{2.8} holds for $m\in\N\cup\{0\}$. 
	
	Next we verify the identities \eqref{2.9} and \eqref{2.10}. To this end, we derive a recurrence formula different from \eqref{2.11}. For $m\in\N$, it is clear to see that 
	\ben
	  \pa_s^m\left((s^2+t^2)\frac{1}{\sqrt{s^2+t^2}}\right)=\pa_s^{m-1}\left(\frac{s}{\sqrt{s^2+t^2}}\right). 
	\enn
	Applying Leibniz's formula and \eqref{2.8}, we deduce that for $m\geq2$ 
	\ben
	   &&\quad\pa_s^m\left((s^2+t^2)\frac{1}{\sqrt{s^2+t^2}}\right) \\
	   &&=\frac{P_m(s,t)+2msP_{m-1}(s,t)+(m-1)m(s^2+t^2)P_{m-2}(s,t)}{\left(\sqrt{s^2+t^2}\right)^{2m-1}}
	\enn
	and 
	\ben
	  \pa_s^{m-1}\left(\frac{s}{\sqrt{s^2+t^2}}\right)
	  =\frac{sP_{m-1}(s,t)+(m-1)(s^2+t^2)P_{m-2}(s,t)}{\left(\sqrt{s^2+t^2}\right)^{2m-1}}, 
	\enn
	which indicates that 
	\be\label{2.12}
	  P_m(s,t)+(2m-1)sP_{m-1}(s,t)+(m-1)^2(s^2+t^2)P_{m-2}(s,t)=0. 
	\en
	Combining \eqref{2.11} and \eqref{2.12}, we obtain the recurrence relation  
	\be\label{2.13}
	  \pa_sP_m(s,t)=-m^2P_{m-1}(s,t),~~~m\in\N. 
	\en
	Utilizing the expression of $P_{2m}$ and $P_{2m+1}$, it is derived from \eqref{2.11} and \eqref{2.13} that for $m\geq2$, 
	\be\label{2.14}
	a_{2m,2i}\;&&=
	\left\{
	\begin{array}{cc}
		a_{2m-1,1},~~~&i=0, \\ 
		(2i-4m)a_{2m-1,2i-1}+(2i+1)a_{2m-1,2i+1},~~~&1\leq i\leq m-1, \qquad\\ 
		-2ma_{2m-1,2m-1},~~~&i=m, 
	\end{array}
	\right. \\ \label{2.15}
	a_{2m+1,2i+1}\;&&=
	\left\{
	\begin{array}{cc}
		(2i-4m-1)a_{2m,2i}+(2i+2)a_{2m,2i+2},~~~&0\leq i\leq m-1, \\
		-(2m+1)a_{2m,2m},~~~&i=m. 
	\end{array}
	\right. 
	\en
	and for $m\in\N$, 
	\be\label{2.16}
	  a_{2m,2i}\;&&=-\frac{(2m)^2}{2i}a_{2m-1,2i-1},~~~1\leq i\leq m, \\ \label{2.17}
	  a_{2m+1,2i+1}\;&&=-\frac{(2m+1)^2}{2i+1}a_{2m,2i},~~~~0\leq i\leq m. 
	\en
	
	We shall use the recurrence relations \eqref{2.14}--\eqref{2.17} and the basic properties of combinatorial numbers to prove the identities \eqref{2.9} and \eqref{2.10}. It is first deduced by calculating directly that \eqref{2.9}, \eqref{2.10} hold when $m=1$. Now suppose that \eqref{2.9} and \eqref{2.10} are satisfied for $m\leq l$ with $l\geq1$. By \eqref{2.14} we have that 
	\be\label{2.18}\nonumber
	  \sum_{i=0}^{m+1}\frac{a_{2(m+1),2i}}{C_{2m+1}^{m+1-i}}\;&&=\sum_{i=0}^m\frac{(m+1-i)a_{2(m+1),2i}}{(2m+1)C_{2m}^{m-i}}+a_{2(m+1),2(m+1)} \\ \nonumber
	  &&=\sum_{i=1}^m\frac{(m+1-i)(2i-4m-4)a_{2m+1,2i-1}}{(2m+1)C_{2m}^{m-i}} \\ \nonumber
	  &&\quad+\sum_{i=0}^m\frac{(m+1-i)(2i+1)a_{2m+1,2i+1}}{(2m+1)C_{2m}^{m-i}}-2(m+1)a_{2m+1,2m+1} \\ \nonumber
	  &&=\sum_{i=0}^m[(m+i+1)-(2m+1)(2m+2)]\frac{a_{2m+1,2i+1}}{(2m+1)C_{2m}^{m-i}} \\ 
	  &&=-(2m+2)\sum_{i=0}^m\frac{a_{2m+1,2i+1}}{C_{2m}^{m-i}}+\sum_{i=0}^m\frac{a_{2m+1,2i+1}}{C_{2m+1}^{m-i}} 
	\en
	with $m\in\N$. Following the similar procedure, we derive from \eqref{2.15} that for $m\in\N$ 
	\be\label{2.19}
	  \sum_{i=0}^{m+1}\frac{a_{2m+3,2i+1}}{C_{2m+2}^{m+1-i}}=-(2m+3)\sum_{i=0}^{m+1}\frac{a_{2(m+1),2i}}{C_{2m+1}^{m+1-i}}+\sum_{i=0}^{m+1}\frac{a_{2(m+1),2i}}{C_{2m+2}^{m+1-i}}. 
	\en
	Further, in view of \eqref{2.16}, we deduce that 
	\be\label{2.20}\nonumber
	  \sum_{i=0}^m\frac{a_{2m+1,2i+1}}{C_{2m+1}^{m-i}}\;&&=\sum_{i=1}^{m+1}\frac{a_{2m+1,2i-1}}{C_{2m+1}^{m+1-i}}=-\sum_{i=0}^{m+1}\frac{ia_{2(m+1),2i}}{2(m+1)^2C_{2m+1}^{m+1-i}} \\ \nonumber
	  &&=-\sum_{i=0}^{m+1}\frac{(m+1+i)a_{2(m+1),2i}}{2(m+1)^2C_{2m+1}^{m+1-i}}+\sum_{i=0}^{m+1}\frac{a_{2(m+1),2i}}{(2m+2)C_{2m+1}^{m+1-i}} \\
	  &&=\frac{1}{2m+2}\sum_{i=0}^{m+1}\frac{a_{2(m+1),2i}}{C_{2m+1}^{m+1-i}}-\frac{1}{2m+2}\sum_{i=0}^{m+1}\frac{2a_{2(m+1),2i}}{C_{2m+2}^{m+1-i}}, 
	\en
	where $m\in\N$. Analyzing analogously, by \eqref{2.17}, we can obtain that 
	\be\label{2.21}
	  \sum_{i=0}^m\frac{a_{2m,2i}}{C_{2m}^{m-i}}=\frac{1}{2m+1}\sum_{i=0}^{m}\frac{a_{2m+1,2i+1}}{C_{2m}^{m-i}}-\frac{1}{2m+1}\sum_{i=0}^{m}\frac{2a_{2m+1,2i+1}}{C_{2m+1}^{m-i}} 
	\en
	for $m\in\N$. It is then seen from \eqref{2.18} and \eqref{2.20}, and \eqref{2.19} and \eqref{2.21} that for $m\in\N$ 
	\be\label{2.22}
	  &&\sum_{i=0}^{m+1}\frac{a_{2(m+1),2i}}{C_{2m+2}^{m+1-i}}=-(m+1)\sum_{i=0}^m\frac{a_{2m+1,2i+1}}{C_{2m}^{m-i}}-\frac{2m+1}{2}\sum_{i=0}^m\frac{a_{2m+1,2i+1}}{C_{2m+1}^{m-i}}, \\ \label{2.23}
	  &&\sum_{i=0}^{m+1}\frac{a_{2m+3,2i+1}}{C_{2m+3}^{m+1-i}}=-\frac{2m+3}{2}\sum_{i=0}^{m+1}\frac{a_{2(m+1),2i}}{C_{2m+1}^{m+1-i}}-(m+1)\sum_{i=0}^{m+1}\frac{a_{2(m+1),2i}}{C_{2m+2}^{m+1-i}}. 
	\en
	Finally, using the induction hypotheses, by \eqref{2.18}, \eqref{2.19}, \eqref{2.22} and \eqref{2.23} we yield that 
	\ben
	&&\sum_{i=0}^{l+1}\frac{a_{2(l+1),2i}}{C_{2l+1}^{l+1-i}}=-\frac{2}{2l+3}\sum_{i=0}^{l+1}\frac{a_{2l+3,2i+1}}{C_{2l+2}^{l+1-i}}=\frac{(2l+2)!}{2^{2l+1}}, \\ 
	&&(l+2)\sum_{i=0}^{l+1}\frac{a_{2(l+1),2i}}{C_{2l+2}^{l+1-i}}=-\sum_{i=0}^{l+1}\frac{a_{2l+3,2i+1}}{C_{2l+3}^{l+1-i}}=\frac{(2l+4)!}{2^{2l+3}}. 
	\enn 
	In other words, \eqref{2.9} and \eqref{2.10} are satisfied when $m=l+1$. Hence, the proof is finished by induction. 
\end{proof}
\begin{lemma}\label{lem2.4}
	Follow the notations $x_0$ and $z_j$ in Theorem {\rm \ref{thm2.2}}. Suppose $x_0=(0,0,0)$ and $\nu(x_0)=(1,0,0)$. For $m\in\N$, define 
	\ben
	  &&I_{2m-1}:=\int_{\Om}\frac{1}{|y|}\left(\pa_1^{2m}\frac{1}{|y-z_j|}\right)y_1^{2m-1}dy, \\
	  &&I_{2m}:=\int_{\Om}\frac{1}{|y|}\pa_1\left[\left(\pa_1^{2m}\frac{1}{|y-z_j|}\right)y_1^{2m}\right]dy. 
	\enn
	Then for arbitrarily fixed $m\in\N$, $|I_m|\rightarrow+\infty$ as $j\rightarrow+\infty$. 
\end{lemma}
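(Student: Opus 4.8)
The plan is to prove the stronger quantitative statement that for each fixed $m\in\N$ there is a constant $c_m\neq0$ with $I_m=c_m\ln j+O(1)$ as $j\to\infty$; the non-vanishing of $c_m$ will be exactly the content of the identities \eqref{2.9} and \eqref{2.10}. Write $\vep_j=\delta/j$, so that $z_j=(\vep_j,0,0)$, and recall from \eqref{2.8} (with $s=y_1-\vep_j$, $t=\rho:=(y_2^2+y_3^2)^{1/2}$) that
\[
\pa_1^{2m}\frac1{|y-z_j|}=\frac{P_{2m}(y_1-\vep_j,\rho)}{|y-z_j|^{4m+1}}.
\]
For $I_{2m}$ I would first integrate by parts in $y_1$, using $\pa_1|y|^{-1}=-y_1|y|^{-3}$, to get
\[
I_{2m}=\int_\Om\frac{y_1^{2m+1}}{|y|^3}\,\pa_1^{2m}\frac1{|y-z_j|}\,dy+\int_{\pa\Om}\frac{y_1^{2m}}{|y|}\Bigl(\pa_1^{2m}\frac1{|y-z_j|}\Bigr)\nu_1\,ds;
\]
since $z_j\notin\ov\Om$ this is legitimate (after a routine excision of $x_0$, where $|y|^{-1}$ is only mildly singular), and using that near $x_0=0$ the surface $\pa\Om$ is a graph $y_1=h(y')$ with $h(0)=0$, $\na h(0)=0$ — whence $|h(y')|\le C|y'|^2$ and $|y-z_j|\ge c(\vep_j+|y'|)$ on $\pa\Om$ there — one checks the boundary integral is bounded uniformly in $j$. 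Thus both $I_{2m-1}$ and $I_{2m}$ reduce to $\int_\Om y_1^{a}|y|^{-b}\,\pa_1^{2m}\frac1{|y-z_j|}\,dy+O(1)$ with $(a,b)=(2m-1,1)$, resp.\ $(2m+1,3)$; note $a-b=2m-2\ge0$, and the associated ``frozen'' integrand
\[
g(y):=\frac{y_1^{a}P_{2m}(y_1,\rho)}{|y|^{b+4m+1}}
\]
is homogeneous of degree $a-b-2m-1=-3$, which is the origin of the logarithm.

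Next I would localize around $x_0$ and pass to the tangent half-space. Fix a small $\eta>0$ and split $\Om$ into $\Om\se B_\eta$, $\Om\cap B_{2\vep_j}$ and $\Om\cap(B_\eta\se B_{2\vep_j})$. On $\Om\se B_\eta$ the integrand is smooth and bounded uniformly in $j$, so this piece is $O(1)$. On $\Om\cap B_{2\vep_j}$, the fact that $\Om$ lies locally on the side $\{y_1<h(y')\}$ forces $y_1\le C|y'|^2\le4C\vep_j^2<\vep_j/2$ for $j$ large, hence $|y-z_j|\ge\vep_j/2$, $|\pa_1^{2m}|y-z_j|^{-1}|\le C\vep_j^{-2m-1}$, and since $\int_{B_{2\vep_j}}|y|^{a-b}\,dy\le C\vep_j^{2m+1}$ this piece is $O(1)$ too. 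On the annular region $|y|\ge2\vep_j$ I would freeze the kernel: the mean value theorem applied to the $(-2m-1)$-homogeneous function $\pa_1^{2m}|\cdot|^{-1}$ gives
\[
\Bigl|\pa_1^{2m}\frac1{|y-z_j|}-\frac{P_{2m}(y_1,\rho)}{|y|^{4m+1}}\Bigr|\le C\vep_j\,|y|^{-2m-2},
\]
whose contribution is $\le C\vep_j\int_{2\vep_j}^\eta r^{-2}\,dr=O(1)$. Finally, replacing $\Om\cap(B_\eta\se B_{2\vep_j})$ by the tangent half-space $\Om^-:=\{y_1<0\}$ cut by the same annulus, the symmetric difference lies in the curved collar $\{|y_1|\le C|y'|^2\}\cap B_\eta$, on which $|g(y)|\le C|y|^{2m-4}$; since the collar inside $\{r/2<|y|<r\}$ has volume $\lesssim r^4$, a dyadic sum gives $\int_{\mathrm{collar}}|g|\lesssim\sum_k(2^{-k}\eta)^{2m}=O(1)$ uniformly in $j$.

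What remains is the model integral, which homogeneity evaluates explicitly: in polar coordinates
\[
\int_{\Om^-\cap(B_\eta\se B_{2\vep_j})}g(y)\,dy=\bigl(\ln\eta-\ln(2\vep_j)\bigr)\int_{\Sp^2\cap\{\om_1<0\}}g(\om)\,d\om,
\]
and since $\vep_j=\delta/j$ this is $c_m\ln j+O(1)$ with $c_m=\int_{\Sp^2\cap\{\om_1<0\}}g(\om)\,d\om$. On $\Sp^2$ one has $\rho^2=1-\om_1^2$, so $g|_{\Sp^2}$ is an odd polynomial in $\om_1$ alone; integrating out the azimuthal angle and evaluating the resulting integrals over $[-1,0]$ as Beta integrals one finds
\[
c_{2m-1}=-\frac{\pi}{2m}\sum_{i=0}^m\frac{a_{2m,2i}}{C_{2m-1}^{m-i}},\qquad c_{2m}=-\frac{\pi}{2m+1}\sum_{i=0}^m\frac{a_{2m,2i}}{C_{2m}^{m-i}},
\]
which by \eqref{2.9} and \eqref{2.10} equal $-\pi(2m-1)!/2^{2m-1}$ and $-\pi(2m)!/2^{2m}$, both nonzero. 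Hence $|I_m|=|c_m|\ln j+O(1)\to+\infty$. The step I expect to be the main obstacle is the collar estimate in the borderline case $m=1$, where $g$ is only barely integrable over the collar and one really needs the codimension-two volume bookkeeping, together with the geometric lower bound $|y-z_j|\gtrsim\vep_j+|y'|$ that keeps the inner ball and the boundary term harmless; everything else is power counting plus the two combinatorial identities of Lemma \ref{lem2.3}, which are evidently designed so that the leading coefficient $c_m$ is computable and does not vanish.
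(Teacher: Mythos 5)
Your proposal is correct and reaches the same quantitative conclusion as the paper, namely $I_{2m-1}=-\pi\,(2m-1)!/2^{2m-1}\ln j+O(1)$ and $I_{2m}=-\pi\,(2m)!/2^{2m}\ln j+O(1)$, but by a genuinely different route. The paper begins by invoking a WLOG reduction to the model cylindrical domain $\{y_2^2+y_3^2<1,\,-1<y_1<0\}$, then expands $\pa_1\bigl[(\pa_1^{2m}|y-z_j|^{-1})y_1^{2m}\bigr]$ directly so that $I_{2m}=2m\,I_{2m-1}+\int\frac1{|y|}\pa_1^{2m+1}|y-z_j|^{-1}\,y_1^{2m}\,dy$, and evaluates each integral by explicit antiderivatives in cylindrical coordinates, splitting off a $\frac1j$-small remainder; the leading coefficients are then simplified using both halves of identity \eqref{2.9} (for $a_{2m,2i}$ and for $a_{2m+1,2i+1}$). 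You instead integrate by parts once to transfer the derivative onto $|y|^{-1}$, which puts both integrals in the uniform form $\int_\Om y_1^a|y|^{-b}\pa_1^{2m}|y-z_j|^{-1}\,dy+O(1)$ with homogeneity degree $-3$, then extract the logarithm by a localize / freeze-the-kernel / tangent-half-space scheme plus a spherical homogeneity computation, and invoke \eqref{2.9} and \eqref{2.10} (both about $a_{2m,2i}$ only, bypassing the $a_{2m+1,2i+1}$ identity). Your collar estimate is in fact a rigorous substitute for the paper's somewhat casual ``straighten the boundary'' step, and the spherical/Beta-integral evaluation replaces the paper's iterated one-dimensional antiderivatives. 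Each step you outline (inner-ball bound via $|y-z_j|\gtrsim\vep_j$, kernel freezing via the mean-value theorem, dyadic collar sum, boundary term control via $|y-z_j|\gtrsim\vep_j+|y'|$ on $\pa\Om$ near $x_0$) checks out, and the two final coefficients agree with the paper's after applying \eqref{2.9}, \eqref{2.10}. One small inaccuracy: you flag $m=1$ as the borderline case for the collar sum, but your own bound $\sum_k(2^{-k}\eta)^{2m}$ converges comfortably for every $m\ge1$ (the true borderline would be $m=0$, which is excluded since $m\in\N$), so there is no delicate case; the argument is uniform.
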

\begin{proof}
	The integral is clearly bounded uniformly for $j\in\N$ over the part that away from $x_0$. Further, since $\pa\Om\in C^\infty$, we can straighten the boundary near $x_0$. Thus, there is no loss of genearlity to assume that $\Om=\{y\in\R^3,y_2^2+y_3^2<1,-1<y_1<0\}$. For simplicity, we also set $\delta=1$ in the definition of $z_j$. Then from Lemma \ref{lem2.3}, it is deduced that 
	\ben
	  &&I_{2m-1}=\int_{\Om}\frac{1}{|y|}\frac{P_{2m}(y_1-\frac{1}{j},\sqrt{y_2^2+y_3^2})}{|y-z_j|^{4m+1}}y_1^{2m-1}dy, \\
	  &&I_{2m}=2mI_{2m-1}+\int_{\Om}\frac{1}{|y|}\frac{P_{2m+1}(y_1-\frac{1}{j},\sqrt{y_2^2+y_3^2})}{|y-z_j|^{4m+3}}y_1^{2m}dy. 
	\enn
	
	We proceed to analyze the integral $I_{2m-1}$ by the explicit expression of $\Om$ and $P_{2m}$. It follows that 
	\ben
	  I_{2m-1}\;&&=\sum_{i=0}^ma_{2m,2i}\int_{\Om}\frac{1}{|y|}\frac{(y_1-\frac{1}{j})^{2i}(y_2^2+y_3^2)^{m-i}}{|y-z_j|^{4m+1}}y_1^{2m-1}dy \\
	  &&=2\pi\sum_{i=0}^ma_{2m,2i}\int_{-1}^{0}\int_{0}^{1}\frac{1}{\sqrt{y_1^2+r^2}}\frac{(y_1-\frac{1}{j})^{2i}r^{2(m-i)}}{\sqrt{(y_1-\frac{1}{j})^2+r^2}^{4m+1}}y_1^{2m-1}rdrdy_1 \\
	  &&=-\pi\sum_{i=0}^ma_{2m,2i}\int_{0}^{1}\int_{0}^{1}\frac{1}{\sqrt{y_1^2+r}}\frac{(y_1+\frac{1}{j})^{2i}r^{m-i}}{\sqrt{(y_1+\frac{1}{j})^2+r}^{4m+1}}y_1^{2m-1}drdy_1. 
	\enn
	We spilt $I_{2m-1}$ into two parts. For $0\leq i\leq m$, define 
	\ben
	  &&I_{2m-1,i}^{(1)}:=\int_{0}^{1}\int_{0}^{1}\frac{(y_1+\frac{1}{j})^{2i}r^{m-i}}{((y_1+\frac{1}{j})^2+r)^{2m+1}}y_1^{2m-1}drdy_1, \\
	  &&I_{2m-1,i}^{(2)}:=\int_{0}^{1}\int_{0}^{1}\left(\frac{1}{\sqrt{y_1^2+r}}-\frac{1}{\sqrt{(y_1+\frac{1}{j})^2+r}}\right)\frac{(y_1+\frac{1}{j})^{2i}r^{m-i}}{\sqrt{(y_1+\frac{1}{j})^2+r}^{4m+1}}y_1^{2m-1}drdy_1. 
	\enn
	Clearly, $I_{2m-1}=-\pi\sum_{i=0}^ma_{2m,2i}(I_{2m-1,i}^{(1)}+I_{2m-1,i}^{(2)})$. From the basic knowledge of calculus, it is seen that 
	\ben
	  I_{2m-1,i}^{(1)}\;&&=\sum_{l=0}^{m-i}\int_{0}^1(y_1+\frac{1}{j})^{2(m-l)}y_1^{2m-1}\left(\int_{(y_1+\frac{1}{j})^2}^{(y_1+\frac{1}{j})^2+1}(-1)^{m-i-l}C_{m-i}^l\frac{1}{s^{2m+1-l}}ds\right)dy_1 \\
	  &&=\left(\sum_{l=0}^{m-i}\frac{(-1)^{m-i-l}C_{m-i}^l}{2m-l}\right)\int_{0}^{1}\frac{y_1^{2m-1}}{(y_1+\frac{1}{j})^{2m}}dy_1+O(1) \\
	  &&=\left(\sum_{l=0}^{m-i}\frac{(-1)^{l}C_{m-i}^l}{m+i+l}\right)\left[\int_{\frac{1}{j}}^{\frac{1}{j}+1}\sum_{l=0}^{2m-1}C_{2m-1}^l\left(-\frac{1}{j}\right)^{2m-1-l}\frac{1}{s^{2m-l}}ds\right]+O(1) \\
	  &&=\frac{1}{2mC_{2m-1}^{m-i}}\ln j+O(1), 
	\enn
	where we note that the identity 
	\ben
	  \sum_{l=0}^{m-i}\frac{(-1)^{l}C_{m-i}^l}{m+i+l}=\frac{1}{2mC_{2m-1}^{m-i}}
	\enn
	holds. 
	Further, direct calculation shows that 
	\ben
	  &&\quad\left|\frac{1}{\sqrt{y_1^2+r}}-\frac{1}{\sqrt{(y_1+\frac{1}{j})^2+r}}\right| \\
	  &&=\left|\frac{\frac{1}{j}(2y_1+\frac{1}{j})}{\sqrt{y_1^2+r}\sqrt{(y_1+\frac{1}{j})^2+r}(\sqrt{y_1^2+r}+\sqrt{(y_1+\frac{1}{j})^2+r})}\right| \\
	  &&\leq\frac{1}{j}\frac{1}{\sqrt{y_1^2+r}}\frac{1}{\sqrt{(y_1+\frac{1}{j})^2+r}} 
	\enn
	for $(y_1,r)\in(0,1)^2$, which implies 
	\ben
	  |I_{2m-1,i}^{(2)}|\;&&\leq\frac{1}{j}\int_{0}^{1}\int_{0}^{1}\frac{1}{\sqrt{y_1^2+r}}\frac{(y_1+\frac{1}{j})^{2i}r^{m-i}}{((y_1+\frac{1}{j})^2+r)^{2m+1}}y_1^{2m-1}drdy_1 \\
	  &&\leq\frac{1}{j}\int_{0}^{1}\int_{0}^{1}\frac{(y_1+\frac{1}{j})^{2i}r^{m-i}}{((y_1+\frac{1}{j})^2+r)^{2m+1}}y_1^{2m-2}drdy_1. 
	\enn
	Estimating similarily as $I_{2m-1,i}^{(1)}$, we derive that 
	\ben
	  \int_{0}^{1}\int_{0}^{1}\frac{(y_1+\frac{1}{j})^{2i}r^{m-i}}{((y_1+\frac{1}{j})^2+r)^{2m+1}}y_1^{2m-2}drdy_1\leq Cj
	\enn
	as $j\rightarrow+\infty$ with $C$ a positive constant, which indicates that $I_{2m-1,i}^{(2)}$ is bounded uniformly for $j\in\N$. Therefore, we obtain that 
	\ben
	  I_{2m-1}=-\pi\left(\frac{1}{2m}\sum_{i=0}^m\frac{a_{2m,2i}}{C_{2m-1}^{m-i}}\right)\ln j+O(1). 
	\enn
	Analogously, we can deduce that 
	\ben
	  I_{2m}=-\pi\left(\sum_{i=0}^m\frac{a_{2m,2i}}{C_{2m-1}^{m-i}}+\frac{1}{2m+1}\sum_{i=0}^m\frac{a_{2m+1,2i+1}}{C_{2m}^{m-i}}\right)\ln j+O(1). 
	\enn
	By Lemma \ref{lem2.3}, we then have 
	\ben
	  &&I_{2m-1}=-\pi\frac{(2m-1)!}{2^{2m-1}}\ln j+O(1), \\
	  &&I_{2m}=-\pi\frac{(2m)!}{2^{2m}}\ln j+O(1). 
	\enn
	Thus, the assertion follows. 
\end{proof}
\begin{theorem}\label{thm2.5}
	Follow the notations $x_0$ and $z_j$ in Theorem {\rm \ref{thm2.2}}. For a function $q_0\in C^\infty(\ov\Om,\C)$ and 
	fixed $m\in\N\cup\{-1,0\}$, if $D^\alpha q_0=0$ on $\Gamma_\varepsilon:=\pa\Om\cap B_\varepsilon(x_0)$ with $|\alpha|\leq m$ and $\varepsilon>0$ small, and $\pa_\nu^{m+1}q_0(x_0)\neq0$ {\rm (}if $m=-1$ then these simply means $q(x_0)\neq0${\rm )}, then 
	\ben
	\left\|\int_{\Om}\frac{1}{|x-y|}\frac{q_0(y)}{|y-z_j|}dy\right\|_{H^{m+7/2}(\pa\Om)}+\left\|\pa_\nu\int_{\Om}\frac{1}{|x-y|}\frac{q_0(y)}{|y-z_j|}dy\right\|_{H^{m+5/2}(\pa\Om)}\rightarrow+\infty
	\enn
	as $j\rightarrow+\infty$. 
\end{theorem}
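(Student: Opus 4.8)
The plan is to reduce the statement to the blow-up of the explicit scalar integrals $I_m$ from Lemma~\ref{lem2.4}. The quantity to be controlled is the volume potential $V_j(x):=\int_\Om |x-y|^{-1}q_0(y)|y-z_j|^{-1}\,dy$, which (up to the constant $16\pi^2$) equals $\mathcal{G}_{q_0}\big(\Phi(\cdot,z_j)\big)$ evaluated via the Newtonian kernel. Since $q_0\in C^\infty$, Theorem~\ref{thm2.1} gives that $V_j\in H^{m+7/2}(\pa\Om)$ and $\pa_\nu V_j\in H^{m+5/2}(\pa\Om)$ for each fixed $j$, so the claim is genuinely about the divergence of these norms. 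First I would localize: away from $x_0$ the kernel $|y-z_j|^{-1}$ is smooth and uniformly bounded in $j$ (since $\mathrm{dist}(z_j,\Om)\to0$ only near $x_0$), so that contribution is uniformly bounded in the relevant Sobolev norm; hence it suffices to work with a cutoff $\chi q_0$ supported in $B_\varepsilon(x_0)$ and to estimate the norm on $\Gamma_\varepsilon$.

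Next I would use the Taylor expansion of $q_0$ at $x_0$ in the straightened coordinates of Lemma~\ref{lem2.4} (with $x_0=0$, $\nu(x_0)=(1,0,0)$, $\Om$ locally a half-cylinder). By hypothesis $D^\alpha q_0=0$ on $\Gamma_\varepsilon$ for $|\alpha|\le m$, so the full tangential jets vanish on the boundary patch; writing $q_0(y)=\sum_{k\le m} y_1^k\, g_k(y') + y_1^{m+1} h(y) + (\text{higher order})$ with $g_k$ the tangential restriction of $\pa_1^k q_0$, the vanishing of tangential derivatives forces $g_k\equiv0$ for $k\le m$ and the leading genuinely-present term is $\tfrac{1}{(m+1)!}\pa_\nu^{m+1}q_0(x_0)\, y_1^{m+1} + \dots$, with the remainder contributing a strictly better ($L^\infty$-bounded-in-$j$, or at least lower-order-singular) piece. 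To extract the $H^{m+7/2}(\pa\Om)$ norm I would differentiate $V_j$ in the tangential and normal directions: the dominant blow-up comes from taking $\pa_\nu^{m+2}$ of the single-layer-type kernel $|x-y|^{-1}$ restricted to $\pa\Om$ (which at $x=x_0$ behaves like $\pa_1^{m+2}|y|^{-1}$) paired against the leading term $y_1^{m+1}|y-z_j|^{-1}$ of the integrand. After integrating by parts once in $y_1$ to move a derivative onto $|y-z_j|^{-1}$ — exactly the manipulation that produces the integrands $I_{2m-1}$ and $I_{2m}$ in Lemma~\ref{lem2.4} — the leading coefficient is a nonzero multiple of $\pa_\nu^{m+1}q_0(x_0)$ times $I_{m+1}$ (or the analogous odd/even case), which diverges logarithmically in $j$ by Lemma~\ref{lem2.4}. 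All other terms — the remainder in Taylor's formula, the lower-order normal derivatives, the commutator terms from localizing and from the curvature of $\pa\Om$ — I would bound by $O(1)$ or by a strictly lower power, using the same half-cylinder integral estimates as in the proof of Lemma~\ref{lem2.4} (the $I^{(2)}$-type bounds there show the error from replacing one Newtonian kernel by the other is $O(1)$).

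The main obstacle I anticipate is the bookkeeping of \emph{which} derivative of $V_j$ at a boundary point isolates the $I_m$ integrals cleanly, i.e., matching the order $m+7/2$ of the Sobolev norm to exactly $m+2$ derivatives landing on the $|x-y|^{-1}$ factor (the half-integer mismatch being absorbed by a trace argument: it suffices that some $(m+2)$-th order derivative of $V_j$ on $\pa\Om$ has $L^2(\Gamma_\varepsilon)$ norm tending to $\infty$, since that already forces the $H^{m+7/2}$ norm to diverge; likewise $\pa_\nu V_j$ with $m+1$ further derivatives for the $H^{m+5/2}$ statement). The second delicate point is confirming that the remainder after peeling off the $y_1^{m+1}$ term really is lower-order: here the key is that every extra power of $y_1$ in the integrand improves the singularity (it kills one order of the blow-up, exactly as $I_{2m-1,i}^{(2)}$ was shown to be bounded while $I_{2m-1,i}^{(1)}$ was $\sim \ln j$), and that tangential derivatives of the cutoff produce integrands supported away from the singular direction. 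Once these two points are settled, the divergence of the stated norms follows from the $|I_{m+1}|\to\infty$ conclusion of Lemma~\ref{lem2.4} together with the nonvanishing of $\pa_\nu^{m+1}q_0(x_0)$.
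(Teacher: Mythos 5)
There is a genuine gap at the central step of your reduction. You propose to extract the blow-up by applying $\pa_\nu^{m+2}$ (in the $x$ variable) to the potential restricted to $\pa\Om$, and you justify the link to the hypotheses by the claim that if some $(m+2)$-th order derivative of $V_j$ on $\pa\Om$ blows up in $L^2(\Gamma_\varepsilon)$ then $\|V_j\|_{H^{m+7/2}(\pa\Om)}$ must blow up. This is false when that derivative contains more than one normal differentiation: the norms in the statement are Sobolev norms \emph{on the boundary manifold}, so they control only tangential derivatives of the two traces $V_j|_{\pa\Om}$ and $\pa_\nu V_j|_{\pa\Om}$; they give no control whatsoever over $\pa_\nu^k V_j$ on $\pa\Om$ for $k\geq2$, and hence the divergence of a quantity built from $\pa_\nu^{m+2}$ of the kernel $|x-y|^{-1}$ cannot, by itself, contradict boundedness of the stated norms. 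Exactly one normal derivative is available (the given $\pa_\nu V_j$), which is why your plan works only for the lowest case $m=-1$, where a single derivative suffices.

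The paper's proof supplies precisely the missing device. Arguing by contradiction and using only the surface gradient $\Grad=(\pa_2,\pa_3)$ in $x$ on the flattened patch, it converts each tangential $x$-derivative into a $y$-derivative on the density by integration by parts in the volume integral, with the boundary terms on $\Gamma_\varepsilon$ vanishing because $D^\alpha q_0=0$ there (and the rest being harmless). Tangential derivatives falling on $q_0$ are benign by the vanishing hypothesis; when $\pa_2^2$ and $\pa_3^2$ both fall on $1/|y-z_j|$, the harmonicity of $1/|y-z_j|$ is used to replace $\pa_2^2+\pa_3^2$ by $-\pa_1^2$, i.e.\ tangential control is traded for normal derivatives on the singular kernel in the $y$ variable. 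Iterating this $i_0$ times (and using the single admissible $\pa_\nu$ in $x$ to start the odd case) produces, after the Taylor expansion of $q_0$, exactly the integrals $I_{2m-1}$ and $I_{2m}$ of Lemma \ref{lem2.4}, with the Newtonian kernel $1/|x-y|$ left undifferentiated. Your localization, Taylor expansion, the role of extra powers of $y_1$, and the final appeal to Lemma \ref{lem2.4} all match the paper; what is missing is this normal-to-tangential conversion (via integration by parts in $y$ plus harmonicity, or equivalently via the equation satisfied by the potential together with the vanishing of $D^\alpha q_0$ on $\Gamma_\varepsilon$), without which the claimed contradiction with the boundary norms does not follow. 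A secondary issue: to pass from your pairing of $\pa_1^{m+2}|y|^{-1}$ with $y_1^{m+1}|y-z_j|^{-1}$ to the Lemma \ref{lem2.4} integrands you would need $m+2$ integrations by parts, not one, and the resulting boundary terms on $\Gamma_\varepsilon$ (where $\pa_1^k|y|^{-1}$ is singular at the origin) require their own estimates.
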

\begin{proof}
	Without loss of genearlity, we again suppose $x_0=(0,0,0)$ and $\nu(x_0)=(1,0,0)$. We start from the case that $q_0(x_0)\neq0$. Assume in the contrary that the norms are bounded for a subsequence $\{j_l\}_{l\in\N}\subset\N$. It is known that $(q_0(x)-q_0(x_0))/|x-z_j|$ is uniformly bounded in $H^1(\Om)$ for $j\in\N$. By the regularity of volume potential (see Theorem \ref{thm2.1}) and the trace theorem, since $q_0(x_0)\neq0$, we obtain that 
	\ben
	  \left\|\int_{\Om}\frac{1}{|x-y|}\frac{1}{|y-z_{j_l}|}dy\right\|_{H^{5/2}(\pa\Om)}+\left\|\pa_\nu\int_{\Om}\frac{1}{|x-y|}\frac{1}{|y-z_{j_l}|}dy\right\|_{H^{3/2}(\pa\Om)}\leq C 
	\enn
	for $l\in\N$ with $C>0$ a constant, which implies that 
	\ben
	  \left\|\pa_1\int_{\Om}\frac{1}{|x-y|}\frac{1}{|y-z_{j_l}|}dy\right\|_{H^{3/2}(\pa\Om)}\leq C. 
	\enn
	Due to the bounded embedding from $H^{3/2}(\pa\Om)$ into $C(\pa\Om)$, we further derive that 
	\ben
	\left\|\int_{\Om}\frac{y_1-x_1}{|x-y|^3}\frac{1}{|y-z_{j_l}|}dy\right\|_{C(\pa\Om)}\leq C. 
	\enn
	 Now we claim that the above integral would blow up with $x=x_0$ and as $l\rightarrow+\infty$, which leads to a contradiction and finishes the proof when $q_0(x_0)\neq0$. To this end, for the same reason in Lemma \ref{lem2.4}, we can again suppose that $\Om=\{y\in\R^3,y_2^2+y_3^2<1,-1<y_1<0\}$. Then it follows that 
	 \ben
	   \int_{\Om}\frac{-y_1}{|y|^3}\frac{1}{|y-z_{j_l}|}dy\;&&\geq\int_{\Om}\frac{-y_1}{|y-z_{j_l}|^4}dy \\
	   &&=\pi\int_{0}^{1}\int_{0}^{1}\frac{y_1}{((y_1+\frac{1}{j_l})^2+r)^2}drdy_1 \\
	   &&=\pi\int_{0}^{1}\left(\frac{y_1}{(y_1+\frac{1}{j_l})^2}-\frac{y_1}{(y_1+\frac{1}{j_l})^2+1}\right)dy_1 \\
	   &&\geq\pi\int_{0}^{1}\left(\frac{1}{y_1+\frac{1}{j_l}}-\frac{1}{j_l}\frac{1}{(y_1+\frac{1}{j_l})^2}\right)dy_1-\pi \\
	   &&=\pi\left(\ln j_l+\ln(1+\frac{1}{j_l})+\frac{1}{j_l+1}-1\right)-\pi, 
	 \enn
	 which confirms the claim. 
	 
	 Next we deal with the remaining part still by contradiction. Suppose that the norms are bounded for $\{j_l\}_{l\in\N}$. Clearly, the norms are also bounded when restricted on $\Gamma_\varepsilon$, i.e., 
	 \ben
	 \left\|\int_{\Om}\frac{1}{|x-y|}\frac{q_0(y)}{|y-z_{j_l}|}dy\right\|_{H^{m+7/2}(\Gamma_\varepsilon)}+\left\|\pa_\nu\int_{\Om}\frac{1}{|x-y|}\frac{q_0(y)}{|y-z_{j_l}|}dy\right\|_{H^{m+5/2}(\Gamma_\varepsilon)}\leq C. 
	 \enn
	 Since $\pa\Om\in C^\infty$, for $\varepsilon>0$ sufficiently small, we can smoothly straighten the boundary $\Gamma_\varepsilon$ such that the norms after straightening are equivalent (see \cite[Theorem 3.23]{WM00}). Hence, there is no loss of genearlity to assume that $\Gamma_\varepsilon\subset\{y\in\R^3,y_1=0\}$. 
	 
	 For $m=2i_0$ even ($i_0\in\N\cup\{0\}$), it is then deduced that 
	 \ben
	   \left\|\Grad\int_{\Om}\frac{1}{|x-y|}\frac{q_0(y)}{|y-z_{j_l}|}dy\right\|_{H^{m+5/2}(\Gamma_\varepsilon)}\leq C
	 \enn
	 with $\Grad$ the surface gradient. Since $\Gamma_\varepsilon\subset\{y\in\R^3,y_1=0\}$, we have $\Grad=(\pa_2,\pa_3)$. From the condition that $q_0=0$ on $\Gamma_\varepsilon$, integration by parts yields that 
	 \ben
	   \pa_i\int_{\Om}\frac{1}{|x-y|}\frac{q_0(y)}{|y-z_{j_l}|}dy=\int_{\Om}\frac{1}{|x-y|}\pa_{i}\frac{q_0(y)}{|y-z_{j_l}|}dy-\int_{\pa\Om\setminus\ov\Gamma_\varepsilon}\frac{1}{|x-y|}\frac{q_0(y)}{|y-z_{j_l}|}\nu_i(y)ds(y)
	 \enn
	 for $x\in\ov\Om$ and $i=1,2,3$. Evidently, we have 
	 \ben
	   \left\|\int_{\pa\Om\setminus\ov\Gamma_\varepsilon}\frac{1}{|x-y|}\frac{q_0(y)}{|y-z_{j_l}|}\nu_i(y)ds(y)\right\|_{H^{m+5/2}(\Gamma_\varepsilon)}\leq C. 
	 \enn
	 By the assumptions that $\nu(x_0)=(1,0,0)$ and $D^\alpha q_0=0$ on $\Gamma_\varepsilon$ when $|\alpha|\leq m$, we see that $D^\alpha q_0(x_0)=0$ for $\alpha=(\alpha_1,\alpha_2,\alpha_3)$ with $\alpha_1\leq m$. Therefore, for $i=2,3$, it follows that 
	 \ben
	   &&\left|D^\alpha\left(\frac{\pa_{i}q_0(x)}{|x-z_{j_l}|}\right)\right|\leq C,~x\in\Om,~|\alpha|\leq m, \\
	   &&\left|D^\alpha\left(\frac{\pa_{i}q_0(x)}{|x-z_{j_l}|}\right)\right|\leq\frac{C}{|x-z_{j_l}|},~x\in\Om,~|\alpha|=m+1 
	 \enn
	 with the constant $C>0$ independent of $l\in\N$, which implies that $\pa_{i} q_0/|x-z_{j_l}|$ is uniformly bounded in $H^{m+1}(\Om)$. Thus, we obtain that 
	 \ben
	   \left\|\int_{\Om}\frac{1}{|x-y|}q_0(y)\pa_{i_1}\frac{1}{|y-z_{j_l}|}dy\right\|_{H^{m+5/2}(\Gamma_\varepsilon)}\leq C
	 \enn
	 for $i_1=2,3$ and $l\in\N$, which leads to 
	 \ben
	 \left\|\Grad\int_{\Om}\frac{1}{|x-y|}q_0(y)\pa_{i_1}\frac{1}{|y-z_{j_l}|}dy\right\|_{H^{m+3/2}(\Gamma_\varepsilon)}\leq C. 
	 \enn
	 Analyzing analogously, it is derived that 
	 \ben
	   \left\|\int_{\Om}\frac{1}{|x-y|}q_0(y)\pa_{i_1}\pa_{i_2}\frac{1}{|y-z_{j_l}|}dy\right\|_{H^{m+3/2}(\Gamma_\varepsilon)}\leq C
	 \enn
	 for $i_1,i_2=2,3$. By taking $(i_1,i_2)=(2,2),(3,3)$, since $1/|y-z_{j_l}|$ is harmonic in $\Om$, it is further deduced that 
	 \ben
	 \left\|\int_{\Om}\frac{1}{|x-y|}q_0(y)\pa_{1}^2\frac{1}{|y-z_{j_l}|}dy\right\|_{H^{m+3/2}(\Gamma_\varepsilon)}\leq C. 
	 \enn
	 Repeating the procedure $i_0$ times, finally we would obtain that 
	 \ben
	   \left\|\int_{\Om}\frac{1}{|x-y|}q_0(y)\pa_{1}^{2i_0+2}\frac{1}{|y-z_{j_l}|}dy\right\|_{H^{3/2}(\Gamma_\varepsilon)}\leq C, 
	 \enn
	 which indicates that 
	 \ben
	   \left|\int_{\Om}\frac{1}{|y|}q_0(y)\pa_{1}^{2i_0+2}\frac{1}{|y-z_{j_l}|}dy\right|\leq C 
	 \enn
	 uniformly for $l\in\N$ due to the bounded embedding from $H^{3/2}(\Gamma_\varepsilon)$ into $C(\Gamma_\varepsilon)$. We further yield by the Taylor's expansion that 
	 \ben
	   \left|\left(q_0(y)-\frac{\pa_1^{2i_0+1}q_0(x_0)}{(2i_0+1)!}y_1^{2i_0+1}\right)\pa_{1}^{2i_0+2}\frac{1}{|y-z_{j_l}|}\right|\leq\frac{C}{|y-z_{j_l}|},~y\in\Om. 
	 \enn
	 Since $\pa_{1}^{m+1}q_0(x_0)=\pa_{1}^{2i_0+1}q_0(x_0)\neq0$, we obtain that 
	 \ben
	   \left|\int_{\Om}\frac{1}{|y|}y_1^{2i_0+1}\pa_{1}^{2i_0+2}\frac{1}{|y-z_{j_l}|}dy\right|\leq C, 
	 \enn
	 which clearly contradicts with Lemma \ref{lem2.4}. Therefore, we prove the case when $m$ is even. 
	 
	 As for the case that $m=2i_0+1$ is odd ($i_0\in\N\cup\{0\}$), from the estimates that 
	 \ben
	 \left\|\Grad\pa_\nu\int_{\Om}\frac{1}{|x-y|}\frac{q_0(y)}{|y-z_{j_l}|}dy\right\|_{H^{m+3/2}(\Gamma_\varepsilon)}\leq C, 
	 \enn
	 utilizing the same methods, we can derive that 
	 \ben
	   \left|\int_{\Om}\frac{1}{|y|}\pa_1\left[\left(\pa_1^{2i_0+2}\frac{1}{|y-z_{j_l}|}\right)y_1^{2i_0+2}\right]dy\right|\leq C, 
	 \enn
	 which contradicts with Lemma \ref{lem2.4} again. The proof is thus finished. 
\end{proof}

\section{Proofs of main results}\label{sec3}
\setcounter{equation}{0}
In this section, we first study some further properties of $\psi_{m,j}$ defined in Theorem \ref{thm2.2}. Then we give the proof of the main results Theorems \ref{thm1.1} and \ref{thm1.2}. 

Before going further, we introduce some notations. For $(q_i,D_i,\mathcal{B}_i)$, $i=1,2$, denote by $u_j^{(i)},\varphi_{m,j}^{(i)},\psi_{m,j}^{(i)}$ the functions defined in Theorem \ref{thm2.2} with $m\in\N\cup\{-1,0\}$ and $j\in\N$. 
\begin{theorem}\label{thm3.1}
	For any fixed $x_0\in\pa\Om$ and $m_0\in\N\cup\{0\}$, suppose $D^\alpha q_1=D^\alpha q_2$ on $\Gamma_\varepsilon$ when $|\alpha|\leq m_0$. Define the corresponding $z_j,u_j^{(i)},\varphi_{m,j}^{(i)},\psi_{m,j}^{(i)}$. Then it follows that 
	\be\label{3.1}
	  &&\|\psi_{m_0+1,j}^{(1)}-\psi_{m_0+1,j}^{(2)}\|_{H^{m_0+3}(\Om)}\leq C, \\ \label{3.2}
	  &&\|D^\alpha q_1\psi_{m_0+1,j}^{(1)}-D^\alpha q_2\psi_{m_0+1,j}^{(2)}\|_{H^{m_0+3-|\alpha|}(\Om)}\leq C,
	\en
	with $|\alpha|\leq m_0+2$, where the positive constant $C$ is independent of $j\in\N$. 
\end{theorem}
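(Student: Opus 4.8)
The plan is to run a single induction on $m$, from $m=-1$ up to $m=m_0+1$, controlling simultaneously and uniformly in $j\in\N$ the three quantities
\[
\|\varphi_{m,j}^{(1)}-\varphi_{m,j}^{(2)}\|_{H^{m+1/2}(\pa\Om)},\quad
\|\psi_{m,j}^{(1)}-\psi_{m,j}^{(2)}\|_{H^{m+2}(\Om)},\quad
\|D^\alpha q_1\psi_{m,j}^{(1)}-D^\alpha q_2\psi_{m,j}^{(2)}\|_{H^{m+2-|\alpha|}(\Om)}\ \ (|\alpha|\le m+2),
\]
the second and third of which, at $m=m_0+1$, are exactly \eqref{3.1} and \eqref{3.2}. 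The base cases $m=-1,0$ are trivial, all iterates being zero. For the inductive step I subtract the two recursions of Theorem \ref{thm2.2}; since $K'$ and $\mathcal S$ do not involve the potential, the only new contribution is the mismatch of volume potentials
\[
\mathcal G_{q_1}\big(\Phi(\cdot,z_j)+\psi_{m-1,j}^{(1)}\big)-\mathcal G_{q_2}\big(\Phi(\cdot,z_j)+\psi_{m-1,j}^{(2)}\big)
=\mathcal G_1\big((q_1-q_2)\Phi(\cdot,z_j)\big)+\mathcal G_1\big(q_1\psi_{m-1,j}^{(1)}-q_2\psi_{m-1,j}^{(2)}\big),
\]
where $\mathcal G_1$ denotes the volume potential with $q\equiv1$, together with its normal-derivative trace. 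The last term on the right is absorbed by the induction hypothesis on the third quantity (with $\alpha=0$) combined with Theorem \ref{thm2.1} and the trace theorem, so the step comes down to an estimate for $(q_1-q_2)\Phi(\cdot,z_j)$ and, for the third quantity, to a singularity bound for $\psi_{m,j}^{(i)}$ near $x_0$.

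The crucial observation is that $D^\alpha q_1=D^\alpha q_2$ on $\G_\vep$ for $|\alpha|\le m_0$ makes $q_1-q_2$ vanish to order $m_0+1$ at $x_0$: $D^\beta(q_1-q_2)(x_0)=0$ for $|\beta|\le m_0$, hence $|D^\beta(q_1-q_2)(x)|\le C|x-x_0|^{m_0+1-|\beta|}$ near $x_0$ when $|\beta|\le m_0$. Because $z_j=x_0+(\delta/j)\nu(x_0)$ lies at distance $\delta/j$ from $\ov\Om$, one has $|x-x_0|\le C|x-z_j|$ on $\ov\Om$, and a Leibniz expansion gives $|D^\alpha[(q_1-q_2)\Phi(\cdot,z_j)](x)|\le C|x-z_j|^{m_0-|\alpha|}$, which lies in $L^2(\Om)$ uniformly in $j$ precisely when $|\alpha|\le m_0+1$. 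Hence $(q_1-q_2)\Phi(\cdot,z_j)$ is bounded in $H^{m_0+1}(\Om)$ uniformly in $j$, so $\mathcal G_1((q_1-q_2)\Phi(\cdot,z_j))$ is bounded in $H^{m_0+3}(\Om)$ with trace and normal-derivative trace bounded in $H^{m_0+5/2}(\pa\Om)$ and $H^{m_0+3/2}(\pa\Om)$. Feeding this bound, the smoothing of $\mathcal S$ and $\mathcal G_1$ (Theorem \ref{thm2.1}) and the trace theorem into the subtracted recursions propagates the bounds on the first two quantities; as $m\le m_0+1$ throughout, the $\psi$-differences never require more than $m_0+3$ derivatives, which is exactly what the bound just obtained provides.

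It remains to treat the third quantity, and here is the main difficulty. Splitting
\[
D^\alpha q_1\psi_{m,j}^{(1)}-D^\alpha q_2\psi_{m,j}^{(2)}=D^\alpha q_1\,\big(\psi_{m,j}^{(1)}-\psi_{m,j}^{(2)}\big)+D^\alpha(q_1-q_2)\,\psi_{m,j}^{(2)},
\]
the first term is handled by the bound already obtained for $\psi_{m,j}^{(1)}-\psi_{m,j}^{(2)}$ (multiplication by the fixed $C^\infty$ function $D^\alpha q_1$). In a Leibniz expansion of the second term, a summand $D^{\alpha+\beta}(q_1-q_2)\,D^\gamma\psi_{m,j}^{(2)}$ with $|\gamma|\le2$ is harmless, $\psi_{m,j}^{(2)}$ being uniformly bounded in $H^2(\Om)$ (Remark \ref{remark2.3}); a summand with $|\gamma|\ge3$ forces $|\alpha+\beta|\le m_0$, hence carries the small factor $|D^{\alpha+\beta}(q_1-q_2)(x)|\le C|x-z_j|^{m_0+1-|\alpha+\beta|}$, and then the product lies in $L^2(\Om)$ uniformly once one knows
\[
|D^\gamma\psi_{m,j}^{(i)}(x)|\le C|x-z_j|^{1-|\gamma|}\ \ (1\le|\gamma|\le m_0+3),\qquad |\psi_{m,j}^{(i)}(x)|\le C,\qquad i=1,2,
\]
uniformly in $j$. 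This mild-singularity estimate is the real obstacle. I would prove it by noting that away from $x_0$ everything is smooth with uniform bounds (Remark \ref{remark2.3} and interior elliptic regularity), while near $x_0$ (straightening $\pa\Om$ as in Lemma \ref{lem2.4} if convenient) one Taylor-expands $q_i$ at $x_0$ and, using $\Delta\mathcal G_1\varphi=-\varphi$ and $\Delta\mathcal S\varphi=0$, writes $\psi_{m,j}^{(i)}$ — modulo a remainder uniformly bounded in $H^s(\Om)$ for any prescribed $s$ — as a finite linear combination, with $j$-uniform coefficients, of the explicit functions $\mathcal G_1((y-x_0)^\beta\Phi(\cdot,z_j))$ and their single-layer analogues; each of these is the Newtonian potential of a density homogeneous of degree $|\beta|-1$ about $z_j$, hence behaves near $x_0$ like $|x-z_j|^{|\beta|+1}$ with $|D^\gamma(\cdot)|\le C|x-z_j|^{|\beta|+1-|\gamma|}$, the worst case being $\mathcal G_1\Phi(\cdot,z_j)\sim c|x-z_j|$. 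The logarithmic factors coming from the Taylor remainders of $q_i$ are harmless, since the vanishing orders enter the $L^2$ check with strict inequality. With this lemma the induction closes and the theorem follows.
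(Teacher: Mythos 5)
Your treatment of \eqref{3.1} (and of the ``first term'' in your splitting for \eqref{3.2}) is essentially sound and runs parallel to the paper: the paper propagates the same Sobolev bookkeeping through the Poisson equation satisfied by $\psi^{(1)}_{m,j}-\psi^{(2)}_{m,j}$ together with Remark \ref{remark2.3}, rather than through the subtracted integral recursion, and the decisive input is identical, namely that $D^\alpha q_1=D^\alpha q_2$ on $\G_\vep$ for $|\alpha|\le m_0$ makes $(q_1-q_2)\Phi(\cdot,z_j)$ uniformly bounded in $H^{m_0+1}(\Om)$. The genuine gap is exactly where you locate ``the real obstacle'': the pointwise lemma $|D^\gamma\psi^{(i)}_{m,j}(x)|\le C|x-z_j|^{1-|\gamma|}$, uniformly in $j$ and up to order $m_0+3$, is asserted but not proved, and the sketch offered does not close it. Writing $\psi^{(i)}_{m,j}$, modulo a remainder bounded in a high $H^s(\Om)$ norm, as a combination of $\mathcal{G}_1\big((y-x_0)^\beta\Phi(\cdot,z_j)\big)$ ``and their single-layer analogues'' requires (a) iterating the expansion through the recursion, since $\mathcal{G}_{q_i}\psi^{(i)}_{m-2,j}$ is only as smooth as the singular function $\psi^{(i)}_{m-2,j}$ you are in the middle of controlling, and (b), much more seriously, uniform-in-$j$ pointwise derivative bounds up to the boundary near $x_0$, to order $m_0+3$, for single-layer potentials whose densities (such as $\pa_\nu G_1((\cdot-x_0)^\beta\Phi(\cdot,z_j))$, further composed with $K'$, $\mathcal{S}$, $\mathcal{G}_{q_i}$) are $j$-dependent, recursively defined and themselves singular at $x_0$. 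That is a boundary-singularity analysis at least as heavy as the theorem; moreover, as stated, the densities are not ``homogeneous of degree $|\beta|-1$ about $z_j$'' (they are centered at $x_0$), and logarithmic losses already occur at the borderline order $|\gamma|=|\beta|+1$, so even the clean form of the claimed bounds would have to be weakened and the logs tracked through all compositions (you rightly note they are harmless in the final $L^2$ check, but they must survive the compositions first). Also, your assertion that away from $x_0$ everything is uniformly smooth does not follow from Remark \ref{remark2.3} alone (which only gives $H^{m+1/2}$, resp.\ $H^{m+3/2}$, control there); it needs a pseudolocality argument for the layer and volume potentials.

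The paper closes \eqref{3.2} with no pointwise information on $\psi_{m,j}$ at all, and you could substitute its device for your missing lemma. It uses the same splitting $D^\alpha q_1\psi^{(1)}-D^\alpha q_2\psi^{(2)}=D^\alpha(q_1-q_2)\psi^{(1)}+D^\alpha q_2(\psi^{(1)}-\psi^{(2)})$, and then estimates $D^\gamma(q_1-q_2)\psi^{(1)}_{m_0+1,j}$ in $H^{m_0+3-|\gamma|}(\Om)$ by a \emph{downward} induction on $|\gamma|$: one computes $\Delta\big[D^\gamma(q_1-q_2)\psi^{(1)}_{m_0+1,j}\big]$, whose terms carry either more derivatives on $q_1-q_2$ (handled by the previous step of the downward induction after rewriting $\na D^\gamma(q_1-q_2)\cdot\na\psi^{(1)}_{m_0+1,j}$ in divergence form), or the factor $D^\gamma(q_1-q_2)q_1\Phi(\cdot,z_j)$ (handled by the vanishing of $q_1-q_2$ on $\G_\vep$), or $D^\gamma(q_1-q_2)\psi^{(1)}_{m_0-1,j}$ (handled by the outer induction hypothesis \eqref{3.1}--\eqref{3.2} at $m_0-2$ via the same splitting); elliptic regularity with the boundary bounds of Remark \ref{remark2.3} then concludes each step. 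Replacing your pointwise lemma by this double-induction elliptic bootstrap makes your argument complete with tools already available in the paper.
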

\begin{proof}
	We first consider the case $m_0=0$. From the definition of $\psi_{1,j}^{(i)}$, $i=1,2$, it can be verified that 
	\ben
	\left\{
	\begin{array}{ll}
		\Delta(\psi_{1,j}^{(1)}-\psi_{1,j}^{(2)})=-(q_1-q_2)\Phi(\cdot,z_j)~~~&{\rm in}~\Om,\\
	    \psi_{1,j}^{(1)}-\psi_{1,j}^{(2)}=\psi_{1,j}^{(1)}-\psi_{1,j}^{(2)}~~~&{\rm on}~\pa\Om. 
	\end{array}
	\right.
	\enn
	Since $q_1=q_2$ on $\Gamma_\varepsilon$, we know that $(q_1-q_2)\Phi(\cdot,z_j)$ are uniformly bounded in $H^1(\Om)$ for $j\in\N$. By Remark \ref{remark2.3}, we also have $\|\psi_{1,j}^{(1)}-\psi_{1,j}^{(2)}\|_{H^{5/2}(\pa\Om)}\leq C$. Therefore, from the regularity of Poisson's equation, it follows that $\|\psi_{1,j}^{(1)}-\psi_{1,j}^{(2)}\|_{H^{3}(\Om)}\leq C$. Further, it is easy to see that  
	\be\label{3.3}
	  D^\alpha q_1\psi_{1,j}^{(1)}-D^\alpha q_2\psi_{1,j}^{(2)}=D^\alpha(q_1-q_2)\psi_{1,j}^{(1)}+D^\alpha q_2(\psi_{1,j}^{(1)}-\psi_{1,j}^{(2)}). 
	\en
	Since $q_1,q_2\in C^\infty(\ov\Om,\C)$, it then suffices to show that 
	\be\label{3.4}
	  \|D^\alpha(q_1-q_2)\psi_{1,j}^{(1)}\|_{H^{3-|\alpha|}(\Om)}\leq C 
	\en
	for $|\alpha|\leq2$. Again by Remark \ref{remark2.3}, we have that $\|\psi_{1,j}^{(1)}\|_{H^2(\Om)}\leq C$, which clearly implies the estimates \eqref{3.4} holds when $|\alpha|=1,2$. Direct calculation yields that 
	\ben
		\Delta[(q_1-q_2)\psi_{1,j}^{(1)}]=\Delta(q_1-q_2)\psi_{1,j}^{(1)}+2\na(q_1-q_2)\cdot\na\psi_{1,j}^{(1)}-(q_1-q_2)(q_1\Phi(\cdot,z_j))
	\enn
	in $\Om$. Since $\|(q_1-q_2)\psi_{1,j}^{(1)}\|_{H^{5/2}(\pa\Om)}\leq C$, we obtain that  \eqref{3.4} with $|\alpha|=0$ holds. 
	
	Now we finish the proof by induction. Suppose \eqref{3.1} and \eqref{3.2} hold when $m_0\leq l$ with $l\geq0$. We shall prove these estimates when $m_0=l+1$. It is deduced from the definition of $\psi_{l+2,j}^{(i)}$, $i=1,2$ that 
	\ben
	\left\{
	\begin{array}{ll}
		\Delta(\psi_{l+2,j}^{(1)}-\psi_{l+2,j}^{(2)})=-(q_1\psi_{l,j}^{(1)}-q_2\psi_{l,j}^{(2)})-(q_1-q_2)\Phi(\cdot,z_j)~~~&{\rm in}~\Om,\\
		\psi_{l+2,j}^{(1)}-\psi_{l+2,j}^{(2)}=\psi_{l+2,j}^{(1)}-\psi_{l+2,j}^{(2)}~~~&{\rm on}~\pa\Om. 
	\end{array}
	\right.
	\enn
	Since $D^\alpha q_1=D^\alpha q_2$ on $\Gamma_\varepsilon$ when $|\alpha|\leq l+1$, it follows from the Taylor's expansion that for $x\in\Om$ 
	\ben
	  &&D^\alpha[(q_1-q_2)\Phi(x,z_j)]\leq C,~~~|\alpha|\leq l+1, \\
	  &&D^\alpha[(q_1-q_2)\Phi(x,z_j)]\leq\frac{C}{|x-z_j|},~~~|\alpha|=l+2, 
	\enn
	which implies that $(q_1-q_2)\Phi(\cdot,z_j)$ is uniformly bounded in $H^{l+2}(\Om)$ for $j\in\N$. By Remark \ref{remark2.3}, we see $\|\psi_{l+2,j}^{(1)}-\psi_{l+2,j}^{(2)}\|_{H^{l+7/2}(\pa\Om)}\leq C$. If $l=0$, clearly we have $\|q_1\psi_{l,j}^{(1)}-q_2\psi_{l,j}^{(2)}\|_{H^{l+2}(\Om)}\leq C$. If $l>0$, from the induction hypothesis, we know the estimate \eqref{3.2} with $m_0=l-1$ and $|\alpha|=0$ holds, which is $\|q_1\psi_{l,j}^{(1)}-q_2\psi_{l,j}^{(2)}\|_{H^{l+2}(\Om)}\leq C$. Therefore, the regularity of Poisson's equation gives $\|\psi_{l+2,j}^{(1)}-\psi_{l+2,j}^{(2)}\|_{H^{l+4}(\Om)}\leq C$. In other words, \eqref{3.1} holds when $m_0=l+1$. In view of \eqref{3.3}, it is enough to prove that 
	\be\label{3.5}
	  \|D^\alpha(q_1-q_2)\psi_{l+2,j}^{(1)}\|_{H^{l+4-|\alpha|}(\Om)}\leq C 
	\en
	for $|\alpha|\leq l+3$. Since $\|\psi_{l+2,j}^{(1)}\|_{H^2(\Om)}\leq C$, again we see that $\eqref{3.5}$ holds for $|\alpha|=l+2,l+3$. Next we show that if \eqref{3.5} holds for $l_0\leq|\alpha|\leq l+3$ with $0<l_0\leq l+2$, then \eqref{3.5} is satisfied when $|\alpha|=l_0-1$, which clearly completes the proof. For $|\gamma|=l_0-1\leq l+1$, it is derived that 
	\ben
	  \Delta[D^\gamma(q_1-q_2)\psi_{l+2,j}^{(1)}]=\;&&\Delta D^\gamma(q_1-q_2)\psi_{l+2,j}^{(1)}+2\na D^\gamma(q_1-q_2)\cdot\na\psi_{l+2,j}^{(1)} \\
	  &&-D^\gamma(q_1-q_2)(q_1\Phi(\cdot,z_j)+\psi_{l,j}^{(1)}) 
	\enn
	in $\Om$ and $\|D^\gamma(q_1-q_2)\psi_{l+2,j}^{(1)}\|_{H^{l+7/2}(\pa\Om)}\leq C$ by Remark \ref{remark2.3}. Utilizing the estimates \eqref{3.5} with $|\alpha|=l_0+1$, we deduce that $\|\Delta D^\gamma(q_1-q_2)\psi_{l+2,j}^{(1)}\|_{H^{l+3-l_0}(\Om)}\leq C$. From the observation  
	\ben
	  \pa_iD^\gamma(q_1-q_2)\pa_i\psi_{l+2,j}^{(1)}=\pa_i(\pa_iD^\gamma(q_1-q_2)\psi_{l+2,j}^{(1)})-\pa_i^2D^\gamma(q_1-q_2)\psi_{l+2,j}^{(1)}
	\enn
	with $i=1,2,3$ and the estimates \eqref{3.5} with $|\alpha|=l_0,l_0+1$, it further follows that $\|\na D^\gamma(q_1-q_2)\cdot\na\psi_{l+2,j}^{(1)}\|_{H^{l+3-l_0}(\Om)}\leq C$. Since $D^\beta[D^\gamma(q_1-q_2)]=0$ on $\Gamma_\varepsilon$ with $|\beta|\leq l+2-l_0$, we know that $D^\gamma(q_1-q_2)q_1\Phi(\cdot,z_j)$ is uniformly bounded in $H^{l+3-l_0}(\Om)$ for $j\in\N$. Moreover, the estimate \eqref{3.2} with $m_0=l-1$ and $|\alpha|=l_0-1$ tells that $\|D^\gamma(q_1-q_2)\psi_{l,j}^{(1)}\|_{H^{l+3-l_0}(\Om)}\leq C$. Thus, by the regularity of Poisson's equation, we conclude that \eqref{3.5} holds when $|\alpha|=l_0-1$ and the proof is finally complete. 
\end{proof}

With all the preceding preparations, we now are at the position to prove the main results of this paper. 
\begin{proof}{(of Theorem \ref{thm1.1})}
	We prove by contradiction. Assume there exists a $l\in\N\cup\{-1,0\}$ and a $x_0\in\Gamma$ such that $D^\alpha q_1=D^\alpha q_2$ on $\Gamma$ when $|\alpha|\leq l$ and $\pa_\nu^{l+1}q_1(x_0)\neq\pa_\nu^{l+1}q_2(x_0)$. Define the corresponding $z_j,u_j^{(i)},\varphi_{m,j}^{(i)},\psi_{m,j}^{(i)}$. We can further find a small $\varepsilon>0$ such that $\pa\Om\cap B_{2\varepsilon}(x_0)\subset\Gamma$. Let $\eta\in C^\infty(\R^3)$ be the cut-off function satisfying that $\eta=1$ in $B_\varepsilon(x_0)$ and $\eta=0$ in $\R^3\setminus\ov{B_{2\varepsilon}(x_0)}$. Define $\hat u_j^{(i)}$, $i=1,2$, to be the unique solution of problem \eqref{1.1} with the boundary data $f_j=\eta\Phi(\cdot,z_j)$, $j\in\N$. Clearly, for fixed $m\in\N$ and $i=1,2$, $\|\hat u_j^{(i)}-u_j^{(i)}\|_{H^m(\Om\setminus\ov D)}\leq C$ uniformly for $j\in\N$. Since $\Lambda_1=\Lambda_2$ locally on $\Gamma$, we have $\pa_\nu\hat u_j^{(1)}=\pa_\nu\hat u_j^{(2)}$ on $\Gamma$ with $j\in\N$. Then by Theorem \ref{thm2.2} and Remark \ref{remark2.3}, we deduce that for $m\in\N$ 
	\ben
	\|\varphi_{m,j}^{(1)}-\varphi_{m,j}^{(2)}\|_{H^{m+1/2}(\pa\Om)}\leq C~~{\rm and}~~\|\psi_{m,j}^{(1)}-\psi_{m,j}^{(2)}\|_{H^{m+3/2}(\pa\Om)}\leq C 
	\enn
	uniformly for $j\in\N$. Taking $m=l+2$, by the definition of $\varphi_{m,j}^{(i)},\psi_{m,j}^{(i)}$ we derive that 
	\ben
	  &&\|\pa_\nu(G_{q_1}-G_{q_2})(\Phi(\cdot,z_j))+\pa_\nu(G_{q_1}\psi_{l,j}^{(1)}-G_{q_2}\psi_{l,j}^{(2)})\|_{H^{l+5/2}}(\pa\Om)\leq C\\
	  &&\|(G_{q_1}-G_{q_2})(\Phi(\cdot,z_j))+(G_{q_1}\psi_{l,j}^{(1)}-G_{q_2}\psi_{l,j}^{(2)})\|_{H^{l+7/2}}(\pa\Om)\leq C. 
	\enn
	By Theorem \ref{thm3.1}, we see from \eqref{3.2} with $m_0=l-1$ and $|\alpha|=0$ that $\|q_1\psi_{l,j}^{(1)}-q_2\psi_{l,j}^{(2)}\|_{H^{l+2}(\Om)}\leq C$. Due to Theorem \ref{2.1} and the trace theorem, we further yield that 
	\ben
	  &&\|\pa_\nu(G_{q_1}\psi_{l,j}^{(1)}-G_{q_2}\psi_{l,j}^{(2)})\|_{H^{l+5/2}}(\pa\Om)\leq C, \\
	  &&\|(G_{q_1}\psi_{l,j}^{(1)}-G_{q_2}\psi_{l,j}^{(2)})\|_{H^{l+7/2}}(\pa\Om)\leq C, 
	\enn
	which implies that 
	\ben
	\left\|\int_{\Om}\frac{1}{|x-y|}\frac{(q_1-q_2)(y)}{|y-z_j|}dy\right\|_{H^{l+7/2}(\pa\Om)}+\left\|\pa_\nu\int_{\Om}\frac{1}{|x-y|}\frac{(q_1-q_2)(y)}{|y-z_j|}dy\right\|_{H^{l+5/2}(\pa\Om)}\leq C
	\enn
	uniformly for $j\in\N$. This contradicts with Theorem \ref{thm2.5} and the proof is thus finished. 
\end{proof}
\begin{proof}{(of Theorem \ref{thm1.2})}
	By Theorem \ref{thm1.1}, since $q_i$ is analytic we immediately obtain that $q_1=q_2$ in $U:=\Om\setminus(\ov D_1\cup\ov D_2)$. It then suffices to show $D_1=D_2$ and $\mathcal{B}_1=\mathcal{B}_2$. Now we introduce the following Green's function $G^{(i)}(x,y)$, $i=1,2$, 
	\be\label{3.7}
	\left\{
	\begin{array}{ll}
		\Delta G^{(i)}(x,y)+q_iG^{(i)}(x,y)=\delta_y~~~&{\rm in}~\Om\setminus\ov D_i,\\
		G^{(i)}(x,y)=0~~~&{\rm on}~\pa\Om, \\
		\mathcal{B}_i(G^{(i)}(x,y))=0~~~&{\rm on}~\pa D_i. 
	\end{array}
	\right.
	\en
	It is well known that problem \eqref{3.7} is well-posed with $y\in\Om\setminus\ov D_i$ and $G^{(i)}(x,y)=G^{(i)}(y,x)$ for $x,y\in\Om\setminus\ov D_i$, $x\neq y$. Further, due to the fact that $\Lambda_1=\Lambda_2$ locally on $\Gamma$ and $q_1=q_2$ in $U$, following the same lines as in \cite{JMK21} (see details in the arguments between (3.44) and (3.47) in \cite{JMK21}), it is derived that $G^{(1)}(x,y)=G^{(2)}(x,y)$ with $x,y\in\ov U$, $x\neq y$. Define $\Psi^{(i)}(x,y):=G^{(i)}(x,y)-\Phi(x,y)$, $i=1,2$. It can be verified that 
	\be\label{3.8}
	\left\{
	\begin{array}{ll}
		\Delta \Psi^{(i)}(x,y)+q_i\Psi^{(i)}(x,y)=-q_i\Phi(x,y)~~~&{\rm in}~\Om\setminus\ov D_i,\\
		\Psi^{(i)}(x,y)=-\Phi(x,y)~~~&{\rm on}~\pa\Om, \\ 
		\mathcal{B}_i(\Psi^{(i)}(x,y))=-\mathcal{B}_i(\Phi(x,y))~~~&{\rm on}~\pa D_i. 
	\end{array}
	\right.
	\en
	
	We first prove that $D_1=D_2=D$. Assuming the oppisite, then there exists a $y_0\in(\pa U\setminus\pa\Om)\cap\pa D_1$ and a $\delta>0$ sufficiently small such that $B_{2\delta}(y_0)\cap\ov D_2=B_{2\delta}(y_0)\cap\pa\Om=\emptyset$. Define 
	\ben
	  y_j:=y_0+\frac{\delta}{j}\nu(y_0)\in B_{2\delta}(y_0)\cap U,~j\in\N 
	\enn
	with $\nu(y_0)$ the unit exterior normal on $y_0\in\pa D_1$. We have $G^{(1)}(x,y_j)=G^{(2)}(x,y_j)$ and thus $\Psi^{(1)}(x,y_j)=\Psi^{(2)}(x,y_j)$ for $x\in\ov U$ and $j\in\N$. Since $\{y_j\}$ has a positive distance from $\pa D_2$ and $\pa\Om$, by \eqref{3.8} and the elliptic regularity we obtain that $\|\Psi^{(2)}(x,y_j)\|_{H^1(\Om\setminus\ov D_2)}\leq C$ uniformly for $j\in\N$. If $\mathcal{B}_1(u)=u$, by the trace theorem we see that $\|\Phi(x,y_j)\|_{H^{1/2}(B_{2\delta}(y_0)\cap\pa D_1)}=\|\Psi^{(1)}(x,y_j)\|_{H^{1/2}(B_{2\delta}(y_0)\cap\pa D_1)}=\|\Psi^{(2)}(x,y_j)\|_{H^{1/2}(B_{2\delta}(y_0)\cap\pa D_1)}\leq C$, which is a contradiction. If $\mathcal{B}_1(u)=\pa_\nu u+\gamma_1u$, similarly, it also leads to a contradiction. Therefore, $D_1=D_2=D$. 
	
	Now we show $\mathcal{B}_1=\mathcal{B}_2$. Suppose not, then there are two cases: (i) $\mathcal{B}_1(u)=\pa_\nu u+\gamma_1u$ and $\mathcal{B}_2(u)=\pa_\nu u+\gamma_2u$ with $\gamma_1\neq\gamma_2$, (ii) $\mathcal{B}_1(u)=u$ and $\mathcal{B}_2(u)=\pa_\nu u+\gamma_2u$. Arbitrarily choosing $y_0\in\Om\setminus\ov D$, we have that $G^{(1)}(x,y_0)=G^{(2)}(x,y_0)$ for $x\in\ov\Om\setminus D$. In case (i), from the boundary conditions in \eqref{3.7}, it is seen that 
	\ben
	  \pa_\nu G^{(1)}(x,y_0)+\gamma_1G^{(1)}(x,y_0)=\pa_\nu G^{(1)}(x,y_0)+\gamma_2G^{(1)}(x,y_0)=0~~{\rm on}~\pa D, 
	\enn
	which implies that $(\gamma_1-\gamma_2)G^{(1)}(x,y_0)=0$ on $\pa D$. Since $\gamma_1\neq\gamma_2$, by the unique continuation we deduce that $G^{(1)}(x,y_0)=0$ for $x\in\Om\setminus(\ov D\cup\{y_0\})$, which is clearly a contradiction. Dealing with case (ii) analogously, it still yields a contradiction. Thus, $\mathcal{B}_1=\mathcal{B}_2$. The proof is complete. 
\end{proof}
\begin{remark}\label{rem3.2}
	Since in the proof we only use the solutions with boundary data $\Phi(x,y)$, Theorem {\rm \ref{thm1.1}} still holds if we weaken the condition from $\Lambda_1=\Lambda_2$ locally to $(\Lambda_1-\Lambda_2)(\Phi(\cdot,y))=0$ locally  with $y\in\R^3\setminus\ov \Om$ near $\Gamma$. 
\end{remark}

\section*{Acknowledgements}
This work was supported by the NNSF of China with grant 12122114.

\end{document}